\newtheorem{thm}{Theorem}
\newtheorem{coro}{Corollary}
\newtheorem{prop}{Proposition}
\newtheorem{ass}{Assumption}
\newtheorem{fact}{Fact}
\newenvironment{proof}{{\bf Proof\,\,}}{\endproof\par}
\newcounter{spb}
\newcommand{\subpb}{(\alph{spb}) \addtocounter{spb}{1}}
\newcommand{\resetspb}{\setcounter{spb}{1}}
\def \openbox{$\sqcup\llap{$\sqcap$}$}
\def \endproof{\enskip \null \nobreak \hfill \openbox \par}
\newcommand{\limto}{\rightarrow}
\newcommand{\bz}{\mathbf 0}
\newcommand{\R}{\mathbb R}
\newcommand{\E}[1]{{\mathbb E}\left[ #1 \right]}
\begin{document}
\title{Non--Asymptotic Convergence Analysis of Inexact Gradient Methods for Machine Learning Without Strong Convexity}
\author{Anthony Man--Cho So\thanks{Department of Systems Engineering and Engineering Management, The Chinese University of Hong Kong, Shatin, N.~T., Hong Kong.  E--mail: {\tt manchoso@se.cuhk.edu.hk}}}
\date{\today}
\maketitle
\thispagestyle{empty}

\begin{abstract}
Many recent applications in machine learning and data fitting call for the algorithmic solution of structured smooth convex optimization problems.  Although the gradient descent method is a natural choice for this task, it requires exact gradient computations and hence can be inefficient when the problem size is large or the gradient is difficult to evaluate.  Therefore, there has been much interest in inexact gradient methods (IGMs), in which an efficiently computable approximate gradient is used to perform the update in each iteration.  Currently, non--asymptotic linear convergence results for IGMs are typically established under the assumption that the objective function is strongly convex, which is not satisfied in many applications of interest; while linear convergence results that do not require the strong convexity assumption are usually asymptotic in nature.  In this paper, we combine the best of these two types of results and establish---under the standard assumption that the gradient approximation errors decrease linearly to zero---the non--asymptotic linear convergence of IGMs when applied to a class of structured convex optimization problems.  Such a class covers settings where the objective function is not necessarily strongly convex and includes the least squares and logistic regression problems.  We believe that our techniques will find further applications in the non--asymptotic convergence analysis of other first--order methods.

\end{abstract}


\section{Introduction}
Motivated by various applications in machine learning and data fitting, there has been much interest in the design and analysis of fast algorithms for solving large--scale structured convex optimization problems recently.  A case in point is the problem of empirical risk minimization, in which one is given a set of input--output samples of a system, and the goal is to minimize the discrepancy between the observed output and the output predicted by certain parametrized model of the system.  Such a problem can be formulated as
\begin{equation}\label{eq:finite-sum}
\min _{x\in \R^{n}} \,\,\, f(x)=\frac{1}{M}\sum_{i=1}^{M} f_i(x),
\end{equation}
where $x\in\R^n$ is the parameter vector, $f_i:\R^n\limto\R$ is a convex function that measures the error or loss between the observed and predicted output of the $i$--th sample, and $M$ is the total number of available samples.  When every $f_i$ is smooth, a simple and natural approach for solving Problem~(\ref{eq:finite-sum}) is to use gradient descent.  However, this requires the computation of the full gradient $\nabla f = (1/M)\sum_{i=1}^M \nabla f_i$ in every iteration and hence can be expensive when $M$ is large or some of the $\nabla f_i$'s are difficult to evaluate.  Nevertheless, it is possible to circumvent such difficulty by exploiting the sum structure of $\nabla f$.  One strategy is to use a subset of the summands that make up the full gradient $\nabla f$ to update the solution in each iteration.  This leads to the class of incremental gradient methods, whose update formulae take the form
\begin{equation} \label{eq:IncGM-update}
x^{k+1} = x^k - \frac{\alpha_k}{|I_k|} \sum_{i \in I_k} \nabla f_i(x^k).
\end{equation}
Here, $\alpha_k > 0$ is the step size in the $k$--th iteration, and $I_k$ is a (possibly random) subset of $\mathscr{M}=\{1,2,\ldots,M\}$ chosen according to some pre--specified rules (see~\cite{B12} and the references therein for some common choices of $\{I_k\}_{k\ge0}$).  Note that the $k$--th iteration only requires the $|I_k|$ gradient values $\{\nabla f_i(x^k)\}_{i \in I_k}$.  Hence, an iteration of an incremental gradient method will generally be more efficient than that of gradient descent.  However, in order to guarantee convergence, incremental gradient methods of the form~(\ref{eq:IncGM-update}) typically require diminishing step sizes, which results in the slow (sublinear) convergence of these methods~\cite{B12}.  On the other hand, gradient descent with a constant step size can achieve fast (linear) convergence in various settings (see, e.g.,~\cite{N04}).  Thus, a problem of fundamental interest is to design methods that can enjoy both the low per--iteration complexity of incremental gradient methods and the fast convergence of gradient descent.

To approach the above problem, it is useful to consider incremental gradient methods of the form~(\ref{eq:IncGM-update}) under the framework of inexact gradient methods (IGMs).  These methods aim at minimizing an arbitrary smooth function $f$ by computing iterates $\{x^k\}_{k\ge0}$ according to the formula
\begin{equation}\label{eq:update}
x^{k+1} = x^k - \alpha_k \left( \nabla f(x^k)+e^{k+1} \right),
\end{equation}
where $G_k = \nabla f(x^k) + e^{k+1} \in \R^n$ is an approximation of the gradient $\nabla f$ at $x^k$, and $e^{k+1} = G_k - \nabla f(x^k) \in \R^n$ is the (possibly random) approximation error.  It is easy to see that the update formula~(\ref{eq:IncGM-update}) is a special case of~(\ref{eq:update}), with
$$ e^{k+1} =  \frac{1}{|I_k|} \sum_{i \in I_k} \nabla f_i(x^k) - \nabla f(x^k). $$
In fact, many other methods also fall under the IGM framework.  For details, we refer the reader to~\cite{LT93,B99,SLRB11,B12} and the discussions therein.

The rationale behind the update~(\ref{eq:update}) is that an approximate gradient can often be computed very efficiently.  Thus, IGMs could have significant computational gain in each iteration.  However, the convergence rates of such methods depend crucially on the choice of step sizes $\{\alpha_k\}_{k\ge0}$ and the magnitude of the error vectors $\{e^k\}_{k\ge1}$.  Many recent works on the convergence analysis of IGMs have focused on the case where the step sizes are constant and developed conditions under which a {\it non--asymptotic} linear rate of convergence can be achieved.  For instance, Blatt et al.~\cite{BHG07} proposed an IGM, called the incremental aggregated gradient method, for solving Problem~(\ref{eq:finite-sum}) and showed that it converges linearly when $f$ is a strongly convex quadratic function.  Later, Le Roux et al.~\cite{LRSB12} developed another IGM, called the stochastic average gradient method, for solving Problem~(\ref{eq:finite-sum}) and proved that it converges linearly when $f$ is strongly convex.  It is interesting to note that neither of the above results require diminishing step sizes or diminishing error norms.  On another front, Byrd et al.~\cite{BCNW12} established the linear convergence of a certain instantiation of the incremental gradient method~(\ref{eq:IncGM-update}) when $f$ is strongly convex and has a bounded Hessian.  For the general IGM~(\ref{eq:update}) with constant step sizes, Friedlander and Schmidt~\cite{FS12} (see also~\cite{SLRB11}) showed that it converges (sub)linearly if $f$ is strongly convex and the squared error norms $\{\|e^k\|_2^2\}_{k\ge1}$ decrease (sub)linearly to zero.  It should be noted that all the aforementioned linear convergence results apply only to problems with a strongly convex objective.  As such, they do not cover several important applications such as least squares and logistic regression.  Although many works have studied the non--asymptotic convergence rates of IGMs when the objective function is not strongly convex, the best known rate is only sublinear (see, e.g.,~\cite{BB07,NJLS09,BM11,ABRW12,RSS12}).

In another direction, there have been some early works that establish the {\it asymptotic} linear convergence of IGMs without requiring the objective function to be strongly convex.  For instance, Luo and Tseng~\cite{LT92,LT93} showed that if the error norms satisfy $\|e^{k+1}\|_2 = O(\|x^k - x^{k+1}\|_2)$, then the IGM~(\ref{eq:update}) with step sizes bounded away from zero has an asymptotic linear rate of convergence when applied to certain structured convex optimization problems.  In particular, the result of Luo and Tseng applies to least squares and logistic regression.  However, it should be noted that the condition on the error norms as stated above is rather strong, for it implies that the objective values of the iterates are strictly decreasing.  Subsequently, Li~\cite{L93} showed that the asymptotic linear convergence result of Luo and Tseng still holds under the weaker condition that the error norms decrease linearly to zero.  This shows that even with large gradient approximation errors in the early iterations---which typically yields computational savings but may lead to an increase in the objective value in some iterations---the IGM can still converge quickly.

In view of the above discussion, our main contribution in this paper is to show that the IGM~(\ref{eq:update}) with step sizes bounded away from zero has a non--asymptotic (sub)linear rate of convergence when applied to a class of structured convex optimization problems (which includes least squares and logistic regression), provided that the squared error norms $\{\|e^k\|_2^2\}_{k\ge1}$ decrease (sub)linearly to zero.  In particular, our linear convergence result extends those in~\cite{LT92,L93,LT93} in that it holds {\it non--asymptotically}, and those in~\cite{BHG07,SLRB11,BCNW12,FS12} in that it covers cases where the objective function is {\it not necessarily strongly convex}.  A key step in our analysis is to develop a global version of the error bound in~\cite{LT92}.  Such a global error bound provides a way to measure the progress of the IGM~(\ref{eq:update}) in {\it every} iteration and not just those iterations that are close to convergence.  This, together with the powerful convergence analysis framework developed by Luo and Tseng~\cite{LT93}, allows us to establish the desired non--asymptotic convergence rate results.

Finally, we remark that as we are preparing this manuscript, we become aware of the work~\cite{WL13}, in which the authors develop global error bounds similar to ours to study the non--asymptotic convergence rates of feasible descent methods.  However, our work differs from theirs in two important aspects.  First, the analysis in~\cite{WL13} requires the objective values of the iterates to be strictly decreasing.  As mentioned earlier, this can be quite restrictive.  On the other hand, our analysis does not have such a requirement.  Secondly, in the context of risk minimization, the analysis in~\cite{WL13} applies only to the class of globally strongly convex loss functions, which precludes many commonly used loss functions such as the logistic loss $u \mapsto \log(1+\exp(-u))$.  By contrast, our analysis only requires the loss function to be strongly convex on compact subsets, and hence it applies to a much wider class of loss functions.

\section{Preliminaries}
\subsection{Basic Setup and Observations}
In this paper, we focus on the following unconstrained convex optimization problem:
\begin{equation}\label{basic-problem}
 \min_{x\in \mathbb{R}^n} \,\,\, f(x) = g(Ex),
\end{equation}
where $E\in\mathbb{R}^{m\times n}$ is a linear operator and $g: \R^m\limto\R$ is a function satisfying the following assumptions:
\begin{ass}\label{ass:general}{\quad}
\begin{enumerate}

\item[\subpb] The function $g$ is continuously differentiable on $\R^m$ and its gradient $\nabla g$ is Lipschitz continuous with parameter $L>0$ on $\R^m$, i.e.,
$$ \| \nabla g(u) - \nabla g(v) \|_2 \le L\|u-v\|_2 \quad\mbox{for } u,v \in \R^m. $$


\item[\subpb] The function $g$ is strictly convex on $\R^m$.


\end{enumerate}
\resetspb
\end{ass}
The above setup is motivated by the empirical risk minimization problem~(\ref{eq:finite-sum}).  Indeed, in many applications, the prediction error of the $i$--sample $f_i$ can be expressed as $f_i(x) = \ell(b_i,a_i^Tx)$, where $(a_i,b_i) \in \R^n \times \R$ is the $i$--th input--output sample, and $\ell:\R\times\R \limto \R$ is a loss function.  Thus, the objective function $f$ in Problem~(\ref{eq:finite-sum}) can be put into the form $f(x)=g(Ex)$, where $E$ is an $M \times n$ matrix whose $i$--th row is $a_i^T$, and $g:\R^M \limto \R$ is given by
$$ g(y) = \frac{1}{M} \sum_{i=1}^M \ell(b_i,y_i). $$
For instance, by taking $\ell$ to be the square loss $\ell(u,v) = (u-v)^2$, we obtain the least squares regression problem
\begin{equation} \label{eq:lsq}
\min_{x \in \R^n} \,\,\, f(x) = \frac{1}{M} \sum_{i=1}^M \underbrace{ \left( a_i^Tx - b_i \right)^2 }_{\ell(b_i,a_i^Tx)}.
\end{equation}
On the other hand, by using the logistic loss $\ell(u,v) = \log( 1 + \exp(-uv) )$, we arrive at the logistic regression problem
\begin{equation} \label{eq:logistic}
\min_{x \in \R^n} \,\,\, f(x) = \frac{1}{M} \sum_{i=1}^M \underbrace{ \log \left( 1 + \exp(-b_ia_i^Tx) \right) }_{\ell(b_i,a_i^Tx)}.
\end{equation}
In both examples, it is easy to verify that the corresponding $g$ satisfies Assumption~\ref{ass:general}.


Going back to Problem~(\ref{basic-problem}), we note that the strict convexity of $g$ on $\R^m$ does not necessarily imply the strict convexity of $f$ on $\R^n$, as $E$ may not have full column rank.  Now, by Assumption~\ref{ass:general}(a), it is easy to verify that $\nabla f$ is Lipschitz continuous with parameter $L_f = L \|E\|^2$ on $\R^n$, where $\|E\|=\sup_{\|x\|_2=1}\|Ex\|_2$ is the spectral norm of $E$.  Indeed, for $x,y \in \R^n$, we have $\nabla f(x)=E^T\nabla g(Ex)$, and
\begin{eqnarray*}
\| \nabla f(x)-\nabla f(y) \|_2 &=& \left\| E^T \left( \nabla g(Ex) - \nabla g(Ey) \right) \right\|_2 \\
&\le& L\cdot\|E\|\cdot\|Ex-Ey\|_2 \\
&\le& L\|E\|^2\|x-y\|_2.
\end{eqnarray*}
Finally, let $\mathcal{X}$ denote the set of optimal solutions to Problem (\ref{basic-problem}).  We make the following assumption concerning $\mathcal{X}$:
\begin{ass} \label{ass:non-empty}
The optimal solution set $\mathcal{X}$ is non--empty.
\end{ass}

Assumption \ref{ass:non-empty} implies that the optimal value $f_{\rm min}$ of Problem (\ref{basic-problem}) is finite and bounded from below.  This, together with Assumption \ref{ass:general}(b), yields the following simple but useful result:
\begin{prop} \label{prop:inv}
The map $x \mapsto Ex$ is invariant over the optimal solution set $\mathcal{X}$; i.e., there exists a $t^* \in \R^m$ such that $Ex^*=t^*$ for all $x^* \in \mathcal{X}$.
\end{prop}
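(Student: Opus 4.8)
The plan is to argue by contradiction, exploiting the strict convexity of $g$ together with the linearity of $E$. Suppose that there exist two optimal solutions $x_1^*, x_2^* \in \mathcal{X}$ with $Ex_1^* \neq Ex_2^*$. Since both are optimal, $f(x_1^*) = f(x_2^*) = f_{\rm min}$, which by the definition $f = g \circ E$ means $g(Ex_1^*) = g(Ex_2^*) = f_{\rm min}$, where $f_{\rm min}$ is finite by Assumption~\ref{ass:non-empty}.

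I would then consider the midpoint $\bar{x} = \tfrac{1}{2}(x_1^* + x_2^*)$. By the linearity of $E$ we have $E\bar{x} = \tfrac{1}{2}(Ex_1^* + Ex_2^*)$, and applying the strict convexity of $g$ (Assumption~\ref{ass:general}(b)) to the \emph{distinct} points $Ex_1^*$ and $Ex_2^*$ in $\R^m$ yields
\begin{equation*}
f(\bar{x}) = g(E\bar{x}) = g\left( \tfrac{1}{2}\left(Ex_1^* + Ex_2^*\right) \right) < \tfrac{1}{2}\left( g(Ex_1^*) + g(Ex_2^*) \right) = f_{\rm min}.
\end{equation*}
This contradicts the fact that $f_{\rm min}$ is the minimum value of $f$ over $\R^n$. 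Hence no such pair can exist, so $Ex^*$ takes a common value $t^*$ for all $x^* \in \mathcal{X}$, as claimed.

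The argument is elementary, and I do not anticipate a serious obstacle. The one conceptual point worth flagging is that one cannot simply invoke strict convexity of $f$ on $\R^n$: because $E$ need not have full column rank, the composition $f = g \circ E$ may fail to be strictly convex (indeed, $\mathcal{X}$ can contain more than one point, as the excerpt notes just before the statement). The strict convexity must therefore be used in the image space $\R^m$, where Assumption~\ref{ass:general}(b) guarantees it holds. This is exactly what makes the image $Ex^*$, rather than the minimizer $x^*$ itself, the invariant object.
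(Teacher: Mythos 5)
Your proof is correct and is essentially the paper's own argument in contrapositive form: both reduce to evaluating $g$ at the midpoint $\tfrac{1}{2}(Ex_1^*+Ex_2^*)$ and invoking the strict convexity of $g$ on $\R^m$ (the paper derives equality in Jensen's inequality from optimality of the midpoint, while you derive a strict inequality contradicting minimality). Your closing remark about why strict convexity must be applied in the image space rather than to $f$ itself is exactly the right point and matches the paper's discussion preceding the proposition.
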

\begin{proof}
Let $x^*,y^* \in \mathcal{X}$ be arbitrary.  By the convexity of $\mathcal{X}$, we have $(x^*+y^*)/2 \in \mathcal{X}$.  Hence, the convexity of $f$ and optimality of $x^*,y^*$ imply that $f((x^*+y^*)/2) = (f(x^*)+f(y^*))/2$, or equivalently, $g((Ex^*+Ey^*)/2) = (g(Ex^*)+g(Ey^*))/2$.  Since $g$ is strictly convex on $\R^m$, we conclude that $Ex^*=Ey^*$, as desired.
\end{proof}

\subsection{Inexact Gradient Methods}
One approach for solving Problem~(\ref{basic-problem}) is to use inexact gradient methods (IGMs), which compute iterates according to the formula (\ref{eq:update}).  Our goal is to establish the convergence rate of the IGM~(\ref{eq:update}) under Assumptions \ref{ass:general} and \ref{ass:non-empty} and various conditions on the error sequence $\{e^k\}_{k\ge1}$.  We allow for the possibility that $e^1,e^2,\ldots$ are random, in which case the iterates $x^1,x^2,\ldots$ will also be random.  To simplify the exposition, we assume that the step sizes $\{\alpha_k\}_{k\ge0}$ in (\ref{eq:update}) are constant and equal to $1/L_f$.  However, it should be noted that our analysis can also be applied to the case where the step sizes $\{a_k\}_{k\ge0}$ satisfy $\lim\inf_{k\ge0} \alpha_k >0$.

The first step of our convergence analysis is to understand the behavior of the (possibly random) sequence $\{f(x^k)\}_{k\ge0}$.  It is well known that when there is no error (i.e.,~$e^k=\bz$ for all $k\ge0$), the IGM (\ref{eq:update}) will generate a sequence of iterates $\{x^k\}_{k\ge0}$ whose associated objective values $\{f(x^k)\}_{k\ge0}$ are monotonically decreasing~\cite{LP66}.  However, this may not be true in the presence of errors.  The following proposition provides a bound on the difference of the objective values of two successive iterates in terms of the error size.  Its proof is standard and can be found in Appendix~\ref{app:pf-suff-decrease}.
\begin{prop}\label{prop:suff-decrease}
The sequence $\{x^k\}_{k\ge0}$ generated by the IGM (\ref{eq:update}) satisfies
\begin{equation} \label{eq:lip}
f(x^k) - f(x^{k+1}) \ge \frac{L_f}{2} \|x^k - x^{k+1} \|_2^2 - \|e^{k+1}\|_2 \|x^k-x^{k+1}\|_2 
 \end{equation}
for all $k\ge0$.
\end{prop}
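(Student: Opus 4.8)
The plan is to derive inequality~(\ref{eq:lip}) from the standard descent lemma for functions with Lipschitz continuous gradient, applied to $f$. Recall from the discussion preceding Assumption~\ref{ass:non-empty} that $\nabla f$ is Lipschitz continuous with parameter $L_f = L\|E\|^2$ on $\R^n$. The descent lemma then gives, for any $x,y \in \R^n$,
\begin{equation*}
f(y) \le f(x) + \langle \nabla f(x), y-x \rangle + \frac{L_f}{2}\|y-x\|_2^2.
\end{equation*}
I would instantiate this with $x = x^k$ and $y = x^{k+1}$, so that $y - x = x^{k+1} - x^k = -\alpha_k(\nabla f(x^k) + e^{k+1})$. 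Since the step size is $\alpha_k = 1/L_f$, this yields $x^{k+1} - x^k = -(1/L_f)(\nabla f(x^k) + e^{k+1})$, and in particular $\nabla f(x^k) = -L_f(x^{k+1} - x^k) - e^{k+1}$.

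The key step is then to substitute this expression for $\nabla f(x^k)$ into the inner product term. First I would write
\begin{equation*}
f(x^{k+1}) - f(x^k) \le \langle \nabla f(x^k), x^{k+1} - x^k \rangle + \frac{L_f}{2}\|x^{k+1}-x^k\|_2^2,
\end{equation*}
and then replace $\nabla f(x^k)$ by $-L_f(x^{k+1}-x^k) - e^{k+1}$. The term involving $-L_f(x^{k+1}-x^k)$ contributes $-L_f\|x^{k+1}-x^k\|_2^2$, which combines with the quadratic term to leave $-\tfrac{L_f}{2}\|x^{k+1}-x^k\|_2^2$. The remaining cross term is $-\langle e^{k+1}, x^{k+1}-x^k\rangle$, which by Cauchy--Schwarz is at most $\|e^{k+1}\|_2\,\|x^{k+1}-x^k\|_2$ in absolute value. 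Rearranging the resulting inequality (and noting $\|x^{k+1}-x^k\|_2 = \|x^k - x^{k+1}\|_2$) produces exactly~(\ref{eq:lip}).

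This argument is entirely routine, so I do not anticipate any serious obstacle; the only points requiring minor care are bookkeeping with signs when eliminating $\nabla f(x^k)$ and the correct application of Cauchy--Schwarz to the cross term. If one wished to handle the more general step-size regime $\liminf_{k\ge0}\alpha_k > 0$ mentioned in the text, the substitution $\nabla f(x^k) = -(1/\alpha_k)(x^{k+1}-x^k) - e^{k+1}$ would introduce a factor depending on $\alpha_k$ in front of the quadratic term, and one would need $\alpha_k \le 1/L_f$ (or an analogous condition) to retain a nonnegative coefficient; but under the stated assumption $\alpha_k = 1/L_f$ the clean form above holds directly.
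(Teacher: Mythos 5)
Your argument is correct and is essentially identical to the paper's own proof in Appendix~\ref{app:pf-suff-decrease}: both start from the descent lemma for the $L_f$--Lipschitz gradient, eliminate $\nabla f(x^k)$ via the update formula~(\ref{eq:update}) with $\alpha_k = 1/L_f$, and bound the cross term with $e^{k+1}$ by Cauchy--Schwarz. No gaps.
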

An immediate consequence of Proposition \ref{prop:suff-decrease} is the following:
\begin{coro} \label{cor:iter-bd}
(cf.~\cite{L93}) For all $k\ge0$, we have
\begin{enumerate}
\item[\subpb]
$$ \|x^k - x^{k+1}\|_2^2 \le \frac{4}{L_f}\left( f(x^k) - f(x^{k+1}) + \frac{1}{L_f} \|e^{k+1}\|_2^2 \right), $$
\item[\subpb]
$$ 0 \le f(x^{k+1}) - f_{\rm min} \le f(x^k) - f_{\rm min} + \frac{1}{2L_f}\|e^{k+1}\|_2^2. $$
\end{enumerate}
\resetspb
\end{coro}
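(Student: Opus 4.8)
The plan is to derive both inequalities directly from the sufficient--decrease bound (\ref{eq:lip}) in Proposition~\ref{prop:suff-decrease} by means of elementary scalar inequalities, so that no new machinery is needed. Throughout, I would abbreviate $d_k = \|x^k - x^{k+1}\|_2$ and $\varepsilon_k = \|e^{k+1}\|_2$, so that (\ref{eq:lip}) reads $f(x^k) - f(x^{k+1}) \ge \frac{L_f}{2} d_k^2 - \varepsilon_k d_k$. Both parts then amount to analyzing the scalar quadratic $t \mapsto \frac{L_f}{2} t^2 - \varepsilon_k t$: part (a) upper--bounds $d_k^2$, while part (b) lower--bounds the quadratic itself.

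For part (a), I would start from (\ref{eq:lip}) rearranged as $\frac{L_f}{2} d_k^2 \le \left( f(x^k) - f(x^{k+1}) \right) + \varepsilon_k d_k$ and control the cross term $\varepsilon_k d_k$ via Young's inequality, splitting it so that a multiple of $d_k^2$ can be absorbed into the left--hand side. Applying $\varepsilon_k d_k \le \frac{L_f}{4} d_k^2 + \frac{1}{L_f} \varepsilon_k^2$ (Young's inequality with weight $L_f/2$) leaves $\frac{L_f}{4} d_k^2$ on the left after cancellation; multiplying through by $4/L_f$ then yields exactly the claimed bound. The only point requiring care is choosing the weight in Young's inequality so that the residual coefficient on $d_k^2$ is precisely $L_f/4$, which is what produces the factor $4/L_f$ in the statement.

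For part (b), the left inequality $0 \le f(x^{k+1}) - f_{\rm min}$ is immediate from the definition of $f_{\rm min}$ as the optimal value, which is finite by Assumption~\ref{ass:non-empty}. The right inequality is equivalent to the lower bound $f(x^k) - f(x^{k+1}) \ge -\frac{1}{2L_f}\varepsilon_k^2$, which I would obtain by completing the square in (\ref{eq:lip}): since $\frac{L_f}{2} d_k^2 - \varepsilon_k d_k = \frac{L_f}{2}\left(d_k - \frac{\varepsilon_k}{L_f}\right)^2 - \frac{\varepsilon_k^2}{2L_f} \ge -\frac{\varepsilon_k^2}{2L_f}$, the bound holds regardless of the value of $d_k$. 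Rearranging gives the desired upper bound on $f(x^{k+1}) - f_{\rm min}$.

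I do not anticipate any genuine obstacle here: the entire argument is a one--variable optimization of the quadratic appearing in (\ref{eq:lip}), with the two parts corresponding to, respectively, absorbing a quadratic term on the left and using the global minimum of the quadratic. The only thing to watch is pinning down the constants exactly, for which the minimizer $d_k = \varepsilon_k / L_f$ of $t \mapsto \frac{L_f}{2} t^2 - \varepsilon_k t$ is the guiding computation in both parts.
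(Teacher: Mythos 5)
Your proposal is correct and is exactly the intended argument: the paper presents Corollary~\ref{cor:iter-bd} as an immediate consequence of Proposition~\ref{prop:suff-decrease} without writing out the details, and your two steps---absorbing the cross term via Young's inequality with weight $L_f/2$ for part (a), and minimizing the scalar quadratic $t \mapsto \frac{L_f}{2}t^2 - \varepsilon_k t$ at $t = \varepsilon_k/L_f$ for part (b)---reproduce the constants $4/L_f$ and $1/(2L_f)$ precisely.
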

Although the error sequence $\{e^k\}_{k\ge1}$ can be random, it should be noted that the inequalities in both Proposition \ref{prop:suff-decrease} and Corollary \ref{cor:iter-bd} hold for every realization of $\{e^k\}_{k\ge1}$.  

\section{Error Bound Condition}
Since we are interested in analyzing the convergence rate of the IGM (\ref{eq:update}), we need a measure to quantify its progress towards optimality.  One natural candidate would be the distance to the optimal solution set $\mathcal{X}$.  Indeed, since $\mathcal{X}$ is non--empty, convex, and closed (the closedness of $\mathcal{X}$ follows from the continuity of $g$), every $x\in\R^n$ has a unique projection $\bar{x} \in \mathcal{X}$ onto $\mathcal{X}$, and hence the measure $x \mapsto \mbox{dist}(x,\mathcal{X})$, where
$$ \mbox{dist}(x,\mathcal{X}) = \min_{y \in \mathcal{X}} \|x-y\|_2, $$
is well defined.  Despite its intuitive appeal, the measure $\mbox{dist}(\cdot,\mathcal{X})$ has one major disadvantage, namely, it is not easy to compute.  An alternative would be to consider the norm of the gradient $x\mapsto\|\nabla f(x)\|_2$, which is motivated by the fact that the optimality condition of (\ref{basic-problem}) is $\nabla f(x)=\bz$.  However, since $\|\nabla f(\cdot)\|_2$ is only a surrogate of $\mbox{dist}(\cdot,\mathcal{X})$, we need to establish a relationship between them.  Towards that end, consider the set
$$ \mathcal{S}_B = \left\{ y \in \R^m: \|y-t^*\|_2 \le B \right\}, $$
where $B>0$ is arbitrary.  We then have the following theorem:
\begin{thm}\label{thm:glo-err-bd}
Suppose that both Assumptions \ref{ass:general} and \ref{ass:non-empty} hold for Problem (\ref{basic-problem}).  Suppose further that $g$ is strongly convex on $\mathcal{S}_B$ for some $B>0$; i.e.,
$$ g(y) - g(z) \ge (y-z)^T\nabla g(z) + \sigma_B\|y-z\|_2^2 \quad\mbox{for all } y,z \in \mathcal{S}_B. $$
Then, there exists a $\tau_B>0$ such that 
\begin{equation}\label{eq:glo-err-bd}
\mbox{dist}(x,\mathcal{X}) \leq \tau_B \|\nabla f(x)\|_2
\end{equation}
for all $x\in\R^n$ satisfying $Ex \in \mathcal{S}_B$.
\end{thm}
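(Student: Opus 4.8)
The plan is to exploit the fact that, although $f$ itself need not be strongly convex, the outer function $g$ is strongly convex on $\mathcal{S}_B$, so that all of the ``curvature'' lives in the $m$--dimensional $Ex$--space. The argument therefore splits into a linear--algebraic step that controls $\mbox{dist}(x,\mathcal{X})$ by $\|Ex-t^*\|_2$, and a strong--convexity step that controls $\|Ex-t^*\|_2$ by $\|\nabla f(x)\|_2$.

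First I would characterize the optimal set. By Proposition~\ref{prop:inv}, every $x^*\in\mathcal{X}$ satisfies $Ex^*=t^*$; conversely, if $Ex=t^*$ then $f(x)=g(Ex)=g(t^*)=f_{\rm min}$, so $x\in\mathcal{X}$. Hence $\mathcal{X}=\{x\in\R^n:Ex=t^*\}$ is exactly the solution set of a consistent linear system. For such a set, a standard linear--algebra fact---projection onto the affine subspace via the Moore--Penrose pseudoinverse $E^+$, equivalently Hoffman's error bound for linear systems---gives
$$\mbox{dist}(x,\mathcal{X}) = \|E^+(Ex-t^*)\|_2 \le \theta_E\,\|Ex-t^*\|_2,$$
where $\theta_E=\|E^+\|_2$ is the reciprocal of the smallest nonzero singular value of $E$ and, in particular, is a finite positive constant depending only on $E$ and not on $x$.

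Next I would bring in the strong convexity of $g$. Let $x^*$ be the projection of $x$ onto $\mathcal{X}$, so that $Ex^*=t^*$, $\|x-x^*\|_2=\mbox{dist}(x,\mathcal{X})$, and $\nabla f(x^*)=E^T\nabla g(t^*)=\bz$ by optimality. Since $Ex\in\mathcal{S}_B$ by hypothesis and $t^*\in\mathcal{S}_B$ trivially, applying the strong--convexity inequality to the pairs $(Ex,t^*)$ and $(t^*,Ex)$ and adding yields
$$2\sigma_B\|Ex-t^*\|_2^2 \le (Ex-t^*)^T\left(\nabla g(Ex)-\nabla g(t^*)\right).$$
Writing $Ex-t^*=E(x-x^*)$ and moving $E$ across the inner product converts the right--hand side into $(x-x^*)^T(\nabla f(x)-\nabla f(x^*))=(x-x^*)^T\nabla f(x)$, which by Cauchy--Schwarz is at most $\mbox{dist}(x,\mathcal{X})\,\|\nabla f(x)\|_2$. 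Thus $2\sigma_B\|Ex-t^*\|_2^2\le\mbox{dist}(x,\mathcal{X})\,\|\nabla f(x)\|_2$.

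Finally I would combine the two estimates: substituting $\|Ex-t^*\|_2\ge\mbox{dist}(x,\mathcal{X})/\theta_E$ from the first step into the second and cancelling one factor of $\mbox{dist}(x,\mathcal{X})$ (the case $\mbox{dist}(x,\mathcal{X})=0$ being trivial) gives
$$\mbox{dist}(x,\mathcal{X}) \le \frac{\theta_E^2}{2\sigma_B}\,\|\nabla f(x)\|_2,$$
so the theorem holds with $\tau_B=\theta_E^2/(2\sigma_B)=\|E^+\|_2^2/(2\sigma_B)$. The main obstacle, and the conceptual heart of the result, is the first step: because $E$ may be rank--deficient, $f$ has no quadratic growth in $x$, so one cannot bound $\mbox{dist}(x,\mathcal{X})$ directly. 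The Hoffman/pseudoinverse bound is exactly what lets the $m$--dimensional strong convexity of $g$ be transported back to an $n$--dimensional distance bound, and the key point to verify carefully is that its constant $\theta_E$ is uniform over all $x$.
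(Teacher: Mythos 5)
Your proof is correct, but it takes a genuinely shorter route than the paper's. The paper never uses the fact that $\mathcal{X}$ coincides with the affine set $\{x\in\R^n : Ex=t^*\}$; instead it proves Proposition~\ref{prop:pre-eb} by applying Hoffman's bound to an augmented linear system in $(u,v)$ that encodes the optimality conditions, obtaining the weaker estimate $\|x-x^*\|_2\le\omega(\|\nabla f(x)\|_2+\|Ex-t^*\|_2)$, and then must run a longer chain of inequalities (culminating in the implication $U^2\le\gamma_B V(U+V)\Rightarrow U\le\bar\gamma_B V$) to eliminate the $\|Ex-t^*\|_2$ term before substituting back. You instead observe that, because Problem~(\ref{basic-problem}) is unconstrained and has no linear term, $Ex=t^*$ already implies $f(x)=g(t^*)=f_{\rm min}$, so $\mathcal{X}$ is exactly the solution set of the consistent system $Eu=t^*$; the elementary pseudoinverse identity $\mbox{dist}(x,\mathcal{X})=\|E^+(Ex-t^*)\|_2$ then gives a Hoffman-type bound with \emph{no} gradient term, and the same strong-convexity computation as in the paper closes the argument by cancelling one factor of $\mbox{dist}(x,\mathcal{X})$. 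Your version is cleaner, bypasses Proposition~\ref{prop:pre-eb} entirely, and yields the explicit constant $\tau_B=\|E^+\|_2^2/(2\sigma_B)$; what the paper's heavier machinery buys is robustness to generalizations (a linear term $q^Tx$ in the objective or polyhedral constraints, as in the Luo--Tseng framework), where $\mathcal{X}$ is no longer just $\{Ex=t^*\}$ and the optimality system genuinely involves $\nabla f$. The only cosmetic caveat is the degenerate case $E=\bz$ (where $\|E^+\|_2=0$ but also $\mbox{dist}(x,\mathcal{X})\equiv 0$, so any $\tau_B>0$ works); this does not affect correctness.
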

Condition (\ref{eq:glo-err-bd}) is a so--called {\it error bound} for Problem (\ref{basic-problem}).  The proof of Theorem \ref{thm:glo-err-bd} relies on the following proposition, whose proof can be found in Appendix \ref{app:pre-eb}:
\begin{prop} \label{prop:pre-eb}
There exist an $\omega > 0$ such that for any $x\in\R^n$, there exists an $x^* \in \mathcal{X}$ satisfying
\begin{equation} \label{eq:pre-eb}
\|x - x^*\|_2 \le \omega \left( \|\nabla f(x)\|_2 + \|Ex - t^* \|_2 \right).
\end{equation}
\end{prop}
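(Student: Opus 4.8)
The plan is to show that, under Assumptions \ref{ass:general} and \ref{ass:non-empty}, the optimal set $\mathcal{X}$ is in fact the \emph{affine subspace} $\{x \in \R^n : Ex = t^*\}$, and then to obtain (\ref{eq:pre-eb}) by purely linear--algebraic means; the gradient term on the right--hand side will come essentially for free. First I would establish the characterization $\mathcal{X} = \{x \in \R^n : Ex = t^*\}$. The inclusion $\mathcal{X} \subseteq \{x : Ex = t^*\}$ is exactly Proposition \ref{prop:inv}. For the reverse inclusion, note that $f_{\rm min} = \min_{x} g(Ex) = \min_{t \in \mathrm{range}(E)} g(t)$, and since $g$ is strictly convex on $\R^m$ (Assumption \ref{ass:general}(b)) and $\mathrm{range}(E)$ is a subspace, the minimizer of $g$ over $\mathrm{range}(E)$ is unique; by Proposition \ref{prop:inv} it equals $t^*$, so $g(t^*) = f_{\rm min}$. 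Hence any $x$ with $Ex = t^*$ satisfies $f(x) = g(t^*) = f_{\rm min}$ and therefore lies in $\mathcal{X}$.

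Given this characterization, $\mathcal{X}$ is a translate of the null space $\mathcal{N} = \{v \in \R^n : Ev = 0\}$. For an arbitrary $x \in \R^n$, let $x^*$ be its orthogonal projection onto $\mathcal{X}$, so that $\|x - x^*\|_2 = \mbox{dist}(x,\mathcal{X})$. Since $\mathcal{X}$ has direction space $\mathcal{N}$, the residual $x - x^*$ is orthogonal to $\mathcal{N}$, i.e.\ $x - x^* \in \mathcal{N}^\perp = \mathrm{range}(E^T)$. The point of working in $\mathcal{N}^\perp$ is that $E$ is bounded below there: writing $\sigma^+ > 0$ for the smallest nonzero singular value of $E$, one has $\|Ev\|_2 \ge \sigma^+ \|v\|_2$ for every $v \in \mathcal{N}^\perp$, which is immediate from the singular value decomposition. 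Applying this with $v = x - x^*$ and using $Ex^* = t^*$ gives $\|Ex - t^*\|_2 = \|E(x - x^*)\|_2 \ge \sigma^+ \|x - x^*\|_2$.

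Rearranging yields $\|x - x^*\|_2 \le (1/\sigma^+)\|Ex - t^*\|_2$; taking $\omega = 1/\sigma^+$ and adding the nonnegative quantity $\omega\|\nabla f(x)\|_2$ to the right--hand side establishes (\ref{eq:pre-eb}). (The degenerate case $E = 0$, in which $\mathcal{N}^\perp = \{0\}$ and hence $x = x^* \in \mathcal{X}$ for every $x$, makes (\ref{eq:pre-eb}) hold trivially with any $\omega$.) The only genuinely delicate point is the affine--subspace characterization of $\mathcal{X}$ in the first step, which is where strict convexity of $g$ does the real work; once that is in place, the remainder is routine. I would also remark that the term $\|\nabla f(x)\|_2$ is not actually needed to prove this particular inequality; it is carried along because it is precisely what Theorem \ref{thm:glo-err-bd} exploits, via strong convexity on $\mathcal{S}_B$, to dominate $\|Ex - t^*\|_2$ and thereby remove the dependence on $Ex - t^*$.
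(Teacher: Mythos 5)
Your proof is correct, but it takes a genuinely different route from the paper's. The paper never characterizes $\mathcal{X}$ explicitly: it encodes optimality as feasibility of the augmented linear system (\ref{eq:opt-cond}) in $(u,v)$ --- coupling $Eu=t^*$ with the fixed--point conditions $v = u - E^T\nabla g(t^*)$ and $u=v$ --- and invokes the Hoffman error bound (Fact \ref{prop:hoff-bd}) at the point $(x,\,x-\nabla f(x))$, which is what produces the two terms $\|\nabla f(x)\|_2$ and $\|Ex-t^*\|_2$ (plus a third term $\|E^T\nabla g(Ex)-E^T\nabla g(t^*)\|_2$ that is absorbed via the Lipschitz continuity of $\nabla g$). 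You instead prove the exact identity $\mathcal{X}=\{x\in\R^n: Ex=t^*\}$ --- and your argument for it is sound; note that uniqueness of the minimizer of $g$ over $\mathrm{range}(E)$ is not even needed, since $g(t^*)=g(Ex^*)=f_{\rm min}$ already gives the reverse inclusion --- and then apply the elementary singular--value bound $\|Ev\|_2 \ge \sigma^+\|v\|_2$ on $\ker(E)^\perp = \mathrm{range}(E^T)$, which is exactly Hoffman's bound for the single system $Eu=t^*$ but with the explicit constant $1/\sigma^+$. Every step checks out, including the orthogonality of $x-x^*$ to $\ker(E)$ and the degenerate case $E=0$. Your version is sharper on two counts: it yields an explicit $\omega$, and it proves the stronger inequality $\mbox{dist}(x,\mathcal{X}) \le \omega\|Ex-t^*\|_2$, correctly exposing the $\|\nabla f(x)\|_2$ term as redundant for this proposition in the unconstrained setting --- indeed, combined with (\ref{eq:strong-cvx}) your bound would shortcut the proof of Theorem \ref{thm:glo-err-bd}, giving $\|Ex-t^*\|_2 \le (\omega/2\sigma_B)\|\nabla f(x)\|_2$ directly. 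What the paper's Hoffman--based formulation buys is robustness: it is the form of the argument that survives when $\mathcal{X}$ is polyhedral but not an affine subspace, e.g.\ in the constrained Luo--Tseng setting that this analysis is modeled on, where optimality cannot be captured by $Ex=t^*$ alone and the gradient term in (\ref{eq:pre-eb}) becomes essential rather than decorative.
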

\noindent{\bf Proof of Theorem \ref{thm:glo-err-bd}} The argument is similar to that in~\cite{LT92}.  Let $x \in \R^n$ be such that $Ex \in \mathcal{S}_B$.  The strong convexity of $g$ on $\mathcal{S}_B$ implies that
\begin{eqnarray*}
   \sigma_B \|Ex-t^*\|_2^2 &\le& g(Ex) - g(t^*) - (Ex-t^*)^T\nabla g(t^*), \\
   \noalign{\smallskip}
   \sigma_B \|Ex-t^*\|_2^2 &\le& g(t^*) - g(Ex) - (t^*-Ex)^T\nabla g(Ex).
\end{eqnarray*}
Adding the above two inequalities yields
\begin{eqnarray}
   2\sigma_B\|Ex-t^*\|_2^2 &\le& (Ex-t^*)^T(\nabla g(Ex) - \nabla g(t^*)) \nonumber \\
   \noalign{\medskip}
   &=& (x-x^*)^T(\nabla f(x) - \nabla f(x^*)) \nonumber \\
   \noalign{\medskip}
   &=& (x-x^*)^T\nabla f(x) \nonumber \\
   \noalign{\medskip}
   &\le& \|x-x^*\|_2  \|\nabla f(x)\|_2, \label{eq:strong-cvx}
\end{eqnarray}
where the second equality follows from the fact that $\nabla f(x^*)=\bz$.  In addition, by Proposition~\ref{prop:pre-eb}, there exists an $x^* \in \mathcal{X}$ such that (\ref{eq:pre-eb}) holds.  Hence, using (\ref{eq:pre-eb}) and (\ref{eq:strong-cvx}), we compute
\begin{eqnarray}
\|x-x^*\|_2^2 &\le& \omega^2 \left( \|\nabla f(x)\|_2 + \|Ex - t^* \|_2 \right)^2 \nonumber \\
\noalign{\medskip}
&\le& 2\omega^2 \left( \|\nabla f(x)\|_2^2 + \|Ex - t^* \|_2^2 \right) \nonumber \\
\noalign{\medskip}
&\le& 2\omega^2\left[ \|\nabla f(x)\|_2 \left( \|\nabla f(x)\|_2 + \frac{1}{2\sigma_B}\|x-x^*\|_2 \right) \right] \nonumber \\
\noalign{\medskip}
&\le& 2\omega^2\left[ \|\nabla f(x)\|_2 \left( \left(1+\frac{\omega}{2\sigma_B}\right) \|\nabla f(x)\|_2 + \frac{\omega}{2\sigma_B} \|Ex-t^*\|_2 \right) \right]. \label{eq:pre-eb2}
\end{eqnarray}
Since $\|Ex-t^*\|_2 = \|Ex-Ex^*\|_2 \le \|E\|\cdot\|x-x^*\|_2$, it follows from (\ref{eq:pre-eb2}) that
$$ \|Ex-t^*\|_2^2 \le 2\|E\|^2\omega^2\left( 1+\frac{\omega}{2\sigma_B} \right) \left[ \|\nabla f(x)\|_2 ( \|\nabla f(x)\|_2 + \|Ex-t^*\|_2 ) \right]. $$
Let $\gamma_B = 2\|E\|^2\omega^2(1+\omega/2\sigma_B)$.  Then, the above inequality is of the form $U^2 \le \gamma_B\left(V(U+V)\right)$ with $U,V\ge 0$.  This implies that $U \le \bar{\gamma}_B V$, where $\bar{\gamma}_B = \left( \gamma_B + \sqrt{\gamma_B^2+4\gamma_B} \right) \big/ 2$.  Hence, we obtain $\|Ex-t^*\|_2 \le \bar{\gamma}_B \|\nabla f(x)\|_2$. This, together with Proposition \ref{prop:pre-eb}, yields (\ref{eq:glo-err-bd}) with $\tau_B = \omega(1+\bar{\gamma}_B)$. \endproof

\section{Convergence Analysis of the IGM} \label{sec:conv-anal}
Armed with Theorem \ref{thm:glo-err-bd}, we are now ready to analyze the convergence rate of the IGM (\ref{eq:update}) in the following two scenarios:
\begin{itemize}
\item[(S1)] The function $g$ is strongly convex on $\R^m$.

\item[(S2)] The function $g$ is strongly convex on $\mathcal{S}_B$ for all $B \in (0,\infty)$, and the (possibly random) error sequence $\{e^k\}_{k\ge1}$ satisfies $\sum_{k=1}^{\infty} \|e^k\|_2^2 \le \Gamma$ for some $\Gamma \in (0,\infty)$.
\end{itemize}
It is easy to verify that the least squares and logistic regression problems (see~(\ref{eq:lsq}) and (\ref{eq:logistic})) fall under scenarios (S1) and (S2), respectively.

The key step of the analysis is the following proposition, which establishes a recurrence relation between $f(x^{k+1})-f_{\rm min}$ and $f(x^k)-f_{\rm min}$.  The convergence rates of the sequences $\{f(x^k)\}_{k\ge0}$ and $\{x^k\}_{k\ge0}$ will then follow.
\begin{prop} \label{prop:cost-to-go}
Suppose that both Assumptions \ref{ass:general} and \ref{ass:non-empty} hold for Problem (\ref{basic-problem}).  Furthermore, suppose that either (S1) or (S2) holds.  Let $\{x^k\}_{k\ge0}$ be the sequence generated by the IGM (\ref{eq:update}), where $\alpha_k=1/L_f$ for all $k\ge0$, and the initial iterate $x^0$ is deterministic.  Then, there exist $\kappa, \nu>0$, which do not depend on the realization of the error sequence $\{e^k\}_{k\ge1}$, such that for all $k\ge0$, 
\begin{equation}\label{ieq:err-bd}
 \mbox{dist}(x^k,\mathcal{X}) \le \kappa \left( \|x^k - x^{k+1}\|_2 + \|e^{k+1}\|_2 \right)
\end{equation}
and
\begin{equation}\label{ieq:cost-to-go}
 f(x^{k+1}) - f_{\rm min} \le \nu\left( \|x^k - x^{k+1}\|_2 + \|e^{k+1}\|_2 \right)^2.
\end{equation}
Consequently, there exist $\mu \in (0,1)$ and $\delta>0$, which do not depend on the realization of $\{e^k\}_{k\ge1}$, such that for all $k\ge0$, 
\begin{equation}\label{ieq:decrease}
 f(x^{k+1}) - f_{\rm min} \le \mu \left( f(x^k) - f_{\rm min} \right) + \delta \|e^{k+1}\|_2^2.
\end{equation}
\end{prop}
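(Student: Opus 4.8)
The plan is to prove the three inequalities in sequence: derive (\ref{ieq:err-bd}) from the global error bound of Theorem~\ref{thm:glo-err-bd}, then obtain (\ref{ieq:cost-to-go}) from (\ref{ieq:err-bd}) via the descent lemma, and finally assemble the recurrence (\ref{ieq:decrease}) by combining (\ref{ieq:cost-to-go}) with the sufficient--decrease estimate of Corollary~\ref{cor:iter-bd}. The starting observation is that rewriting the update (\ref{eq:update}) with $\alpha_k = 1/L_f$ gives $\nabla f(x^k) = L_f(x^k - x^{k+1}) - e^{k+1}$, hence
$$ \|\nabla f(x^k)\|_2 \le L_f \|x^k - x^{k+1}\|_2 + \|e^{k+1}\|_2. $$
Thus, once I show that Theorem~\ref{thm:glo-err-bd} applies to every iterate $x^k$ with a single constant $\tau$ independent of $k$, the bound $\mbox{dist}(x^k,\mathcal{X}) \le \tau \|\nabla f(x^k)\|_2$ combined with the above immediately yields (\ref{ieq:err-bd}) with $\kappa = \tau \max\{L_f,1\}$.

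The crux is establishing this uniform applicability, and the two scenarios are handled differently. Under (S1), $g$ is strongly convex on all of $\R^m$, so $\sigma_B$ may be taken independent of $B$; inspecting the constants in the proof of Theorem~\ref{thm:glo-err-bd} then shows $\tau_B$ is uniformly bounded, so (\ref{eq:glo-err-bd}) holds for every $x \in \R^n$ and in particular for every $x^k$. Under (S2), I first invoke Corollary~\ref{cor:iter-bd}(b), which telescopes to
$$ f(x^k) - f_{\rm min} \le f(x^0) - f_{\rm min} + \frac{1}{2L_f}\sum_{j\ge1}\|e^j\|_2^2 \le f(x^0) - f_{\rm min} + \frac{\Gamma}{2L_f} =: R $$
for every realization (this is where $x^0$ deterministic and $\sum_k\|e^k\|_2^2\le\Gamma$ enter). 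Since $t^*$ minimizes the strictly convex, differentiable $g$ over the subspace $\mbox{range}(E)$ (the optimality condition $\nabla f(x^*)=\bz$ reads $E^T\nabla g(t^*)=\bz$), the restriction of $g$ to $\mbox{range}(E)$ has bounded sublevel sets. Hence there is a deterministic $B>0$ with $\{y\in\mbox{range}(E): g(y)-g(t^*)\le R\}\subseteq \mathcal{S}_B$, and since $g(Ex^k)-g(t^*)=f(x^k)-f_{\rm min}\le R$, every iterate satisfies $Ex^k\in\mathcal{S}_B$. As (S2) guarantees strong convexity of $g$ on this $\mathcal{S}_B$, Theorem~\ref{thm:glo-err-bd} applies uniformly with $\tau=\tau_B$.

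With (\ref{ieq:err-bd}) in hand, I would prove (\ref{ieq:cost-to-go}) by letting $\bar{x}^k\in\mathcal{X}$ be the projection of $x^k$ onto $\mathcal{X}$ and applying the descent lemma (valid since $\nabla f$ is $L_f$--Lipschitz) at $\bar{x}^k$: because $\nabla f(\bar{x}^k)=\bz$ and $f(\bar{x}^k)=f_{\rm min}$, this gives $f(x^{k+1})-f_{\rm min}\le \tfrac{L_f}{2}\|x^{k+1}-\bar{x}^k\|_2^2$. A triangle inequality bounds $\|x^{k+1}-\bar{x}^k\|_2 \le \|x^{k+1}-x^k\|_2 + \mbox{dist}(x^k,\mathcal{X})$, and inserting (\ref{ieq:err-bd}) yields (\ref{ieq:cost-to-go}) with $\nu=\tfrac{L_f}{2}(1+\kappa)^2$. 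For (\ref{ieq:decrease}) I would expand the square in (\ref{ieq:cost-to-go}) via $(a+b)^2\le 2a^2+2b^2$, substitute the bound on $\|x^k-x^{k+1}\|_2^2$ from Corollary~\ref{cor:iter-bd}(a), and collect terms; writing $\Delta_k=f(x^k)-f_{\rm min}$, this produces $(1+c)\Delta_{k+1}\le c\,\Delta_k + C\|e^{k+1}\|_2^2$ for explicit $c,C>0$, which rearranges to (\ref{ieq:decrease}) with $\mu=c/(1+c)\in(0,1)$ and $\delta=C/(1+c)$.

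I expect the main obstacle to be the boundedness argument under (S2): one must show that summable errors keep the objective values bounded and then convert this into a uniform bound on $\|Ex^k-t^*\|_2$, so that a \emph{single} compact set $\mathcal{S}_B$ (hence a single $\sigma_B$ and a single error--bound constant $\tau_B$) serves all iterations. The remaining steps, while requiring care with the constants, are essentially mechanical manipulations of already--established estimates, and every bound holds pathwise, which is precisely why $\kappa,\nu,\mu,\delta$ do not depend on the realization of $\{e^k\}_{k\ge1}$.
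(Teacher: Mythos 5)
Your proposal is correct and follows essentially the same route as the paper: establish that a single $\mathcal{S}_B$ containing every $Ex^k$ works in both scenarios (your bounded--sublevel--set argument for the strictly convex $g$ restricted to $\mathrm{range}(E)$ under (S2) is precisely the content of the fact the paper cites from Tseng), deduce (\ref{ieq:err-bd}) from Theorem~\ref{thm:glo-err-bd} and the update formula, and combine (\ref{ieq:cost-to-go}) with Corollary~\ref{cor:iter-bd}(a) to get the recurrence (\ref{ieq:decrease}). The only local difference is your proof of (\ref{ieq:cost-to-go}), where applying the descent lemma at $\bar{x}^k$ (using $\nabla f(\bar{x}^k)=\bz$) plus a triangle inequality replaces the paper's Mean Value Theorem computation; this is a valid and somewhat cleaner argument that yields the constant $\nu=\tfrac{L_f}{2}(1+\kappa)^2$.
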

\begin{proof}
Let us first verify that in both scenarios (S1) and (S2), there exists a $B>0$ such that $Ex^k \in \mathcal{S}_B$ for all $k\ge0$, and that $g$ is strongly convex on $\mathcal{S}_B$.  In scenario (S1), we can simply set $B=\infty$ to get $\mathcal{S}_B = \R^m$.  In scenario (S2), observe that Corollary \ref{cor:iter-bd}(b) implies
$$ 0 \le f(x^k) - f_{\rm min} \le f(x^0) - f_{\rm min} + \sum_{j=1}^k \|e^j\|_2^2 \le f(x^0) - f_{\rm min} + \Gamma $$
for all $k\ge0$.  Hence, by~\cite[Fact 4.1]{T91}, the sequence $\{Ex^k\}_{k\ge0}$ is bounded.  Consequently, there exists a $B \in (0,\infty)$, which does not depend on the realization of $\{x^k\}_{k\ge0}$, such that $Ex^k \in \mathcal{S}_B$, and $g$ is strongly convex on $\mathcal{S}_B$.

The above argument implies that Theorem \ref{thm:glo-err-bd} applies to both scenarios (S1) and (S2).  Hence, there exists a $\tau>0$, which does not depend on the realization of $\{e^k\}_{k\ge1}$, such that
\begin{eqnarray*}
\mbox{dist}(x^k,\mathcal{X}) &\le& \tau\|\nabla f(x^k)\|_2 \\
\noalign{\medskip}
&=& \tau\left\| L_f(x^k-x^{k+1})-e^{k+1} \right\|_2 \\
\noalign{\medskip}
&\le& \tau \tilde{L}_f \left( \|x^k-x^{k+1}\|_2 + \|e^{k+1}\|_2 \right)
\end{eqnarray*}
for all $k\ge0$, where $\tilde{L}_f = \max\{1,L_f\}$, and the equality follows from the update formula (\ref{eq:update}).  This establishes (\ref{ieq:err-bd}) with $\kappa = \tau\tilde{L}_f$.

To prove (\ref{ieq:cost-to-go}), let $\bar{x}^k=\arg\min_{y\in\mathcal{X}}\|y-x^k\|_2$, where $k=0,1,\ldots$.  Then, we have $f(\bar{x}^k) = f_{\rm min}$, and the above derivation implies that
$$ \|\bar{x}^k - x^k\|_2 \le \kappa \left( \|x^k-x^{k+1}\|_2 + \|e^{k+1}\|_2 \right). $$
Moreover, by the Mean Value Theorem, there exists a $\hat{x}^k \in [\bar{x}^k,x^{k+1}]$ such that
$$ f(x^{k+1}) - f(\bar{x}^k) = \nabla f(\hat{x}^k)^T(x^{k+1}-\bar{x}^k). $$
Hence, it follows that
\begin{eqnarray*}
& & f(x^{k+1}) - f_{\rm min} \\
\noalign{\medskip}
&=& \left( \nabla f(\hat{x}^k) - \nabla f(x^k) \right)^T(x^{k+1}-\bar{x}^k) + \nabla f(x^k)^T(x^{k+1}-\bar{x}^k) \\
\noalign{\medskip}
&\le& \left\| \nabla f(\hat{x}^k) - \nabla f(x^k) \right\|_2 \|x^{k+1}-\bar{x}^k\|_2  + L_f\left( x^k - x^{k+1} - \frac{1}{L_f} e^{k+1} \right)^T(x^{k+1} - \bar{x}^k) \\
\noalign{\medskip}
&\le& \left[ L_f \left( \|\hat{x}^k-x^k\|_2 + \|x^k - x^{k+1}\|_2 \right) + \|e^{k+1}\|_2 \right] \times \|x^{k+1}-\bar{x}^k\|_2 \\
\noalign{\medskip}
&\le& \left[ L_f \left( \|x^k - x^{k+1}\|_2 + \|x^k - \bar{x}^k\|_2 + \|x^k - x^{k+1}\|_2 \right) + \|e^{k+1}\|_2 \right] \\
\noalign{\medskip}
&\quad\times& \left( \|x^k-x^{k+1}\|_2 + \|x^k - \bar{x}^k\|_2 \right) \\
\noalign{\medskip}
&\le& \left[  (2+\kappa)L_f \|x^k-x^{k+1}\|_2 + \left( 1 + \kappa L_f \right) \|e^{k+1}\|_2 \right] \times (1+\kappa) \left( \|x^k-x^{k+1}\|_2 + \|e^{k+1}\|_2 \right) \\
\noalign{\medskip}
&\le& \nu \left( \|x^k-x^{k+1}\|_2 + \|e^{k+1}\|_2 \right)^2,
\end{eqnarray*}
where $\nu = (1+\kappa)\max\{(2+\kappa)L_f,1+\kappa L_f\}$.  

Finally, to prove (\ref{ieq:decrease}), observe that by (\ref{ieq:cost-to-go}) and Corollary \ref{cor:iter-bd}(a),
\begin{eqnarray*}
f(x^{k+1}) - f_{\rm min} &\le& \nu\left( \|x^k - x^{k+1}\|_2 + \|e^{k+1}\|_2 \right)^2 \\
\noalign{\medskip}
&\le& 2\nu \left[ \frac{4}{L_f} \left( f(x^k) - f(x^{k+1}) + \frac{1}{L_f}\|e^{k+1}\|_2^2 \right) + \|e^{k+1}\|_2^2 \right] \\
\noalign{\medskip}
&=& \frac{8\nu}{L_f} \left[ \left( f(x^k) - f_{\rm min} \right) - \left( f(x^{k+1}) - f_{\rm min} \right) + \frac{L_f+L_f^{-1}}{4}\|e^{k+1}\|_2^2 \right].
\end{eqnarray*}
Upon rearranging, the desired result (\ref{ieq:decrease}) follows with
\begin{equation}\label{eq:parameter}
 \mu = \frac{8\nu/L_f}{1+(8\nu/L_f)} \in (0,1), \quad \delta = \frac{2\nu \left[ 1+(1/L_f^2) \right]}{1+(8\nu/L_f)} > 0.
\end{equation}
\end{proof}

\medskip
Proposition \ref{prop:cost-to-go} immediately leads to the following corollary, whose proof can be found in Appendix \ref{app:fn-root}:
\begin{coro}\label{cor:fn-root}
Under the setting of Proposition \ref{prop:cost-to-go}, there exists a $\beta>0$, which does not depend on the realization of $\{e^k\}_{k\ge1}$, such that for all $k\ge0$, we have
$$
 f(x^k)-f_{\rm min}\leq \mu^k \left( f(x^0)-f_{\rm min} \right) + \delta\sum_{j=1}^k \mu^{k-j} \|e^j\|_2^2
$$
and
$$ \left| f(x^{k+1}) - f(x^k) \right| \le \beta \sum_{j=1}^{k+1} \left( \mu^{k+1-j} \|e^j\|_2^2 + \mu^k \right), $$
where $\mu \in (0,1)$ and $\delta > 0$ are given by (\ref{eq:parameter}).
\end{coro}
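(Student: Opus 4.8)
The plan is to derive both inequalities from the scalar affine recurrence~(\ref{ieq:decrease}), so essentially all the analytic content is already packaged and what remains is bookkeeping. Write $d_k = f(x^k) - f_{\rm min}$, so that~(\ref{ieq:decrease}) reads $d_{k+1} \le \mu\, d_k + \delta\|e^{k+1}\|_2^2$, and note $d_k \ge 0$ for all $k\ge 0$ (this is the left inequality in Corollary~\ref{cor:iter-bd}(b), and is trivial for $k=0$ since $f_{\rm min}$ is the optimal value). The first claimed bound is then the solution of this one-dimensional recurrence: unrolling it, or equivalently inducting on $k$, gives $d_k \le \mu^k d_0 + \delta\sum_{j=1}^{k}\mu^{k-j}\|e^j\|_2^2$, which is precisely the first display.

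For the second inequality I would obtain matching one-sided bounds on the increase and the decrease of the objective and then split the absolute value. For the increase, since $1-\mu>0$ and $d_k\ge 0$, the recurrence yields $d_{k+1}-d_k \le -(1-\mu)d_k + \delta\|e^{k+1}\|_2^2 \le \delta\|e^{k+1}\|_2^2$. For the decrease, nonnegativity of $d_{k+1}$ gives $d_k-d_{k+1}\le d_k$, and I would bound $d_k$ using the first inequality just proved. Since both one-sided bounds are nonnegative, adding them controls $\left|f(x^{k+1})-f(x^k)\right| = |d_{k+1}-d_k|$ by $\delta\|e^{k+1}\|_2^2 + \mu^k d_0 + \delta\sum_{j=1}^{k}\mu^{k-j}\|e^j\|_2^2$.

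It then remains to massage this into the stated form $\beta\sum_{j=1}^{k+1}\bigl(\mu^{k+1-j}\|e^j\|_2^2 + \mu^k\bigr)$. The three clean observations are: (i) $\|e^{k+1}\|_2^2$ is exactly the $j=k+1$ term of $\sum_{j=1}^{k+1}\mu^{k+1-j}\|e^j\|_2^2$; (ii) since $\mu\in(0,1)$ we have $\mu^{k-j}=\mu^{-1}\mu^{k+1-j}$, whence $\sum_{j=1}^{k}\mu^{k-j}\|e^j\|_2^2 \le \mu^{-1}\sum_{j=1}^{k+1}\mu^{k+1-j}\|e^j\|_2^2$; and (iii) $\mu^k \le (k+1)\mu^k = \sum_{j=1}^{k+1}\mu^k$, which lets the transient term $\mu^k d_0$ be absorbed into the $\sum\mu^k$ slack. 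Combining these and taking $\beta = \max\{(1+\mu^{-1})\delta,\ f(x^0)-f_{\rm min}\}$ delivers the bound; because $\mu$, $\delta$, and the deterministic $x^0$ do not depend on the realization of $\{e^k\}_{k\ge1}$, neither does $\beta$.

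I do not expect a genuine obstacle here: once~(\ref{ieq:decrease}) is available, the first part is the explicit solution of an affine recurrence and the second is a triangle-inequality splitting of $|d_{k+1}-d_k|$ into an ``increase'' term and a ``decrease'' term. The only point needing a little care is the exponent shift in (ii) together with the use of the $\sum\mu^k$ term in the target to swallow the $\mu^k d_0$ contribution; getting these matchings right is what fixes the constant $\beta$, but each step is elementary.
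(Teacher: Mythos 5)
Your proof is correct and follows essentially the same route as the paper: unroll the recurrence~(\ref{ieq:decrease}) for the first display, then control $\left|f(x^{k+1})-f(x^k)\right|$ by nonnegative one-sided bounds and absorb everything into the stated form via the exponent shift $\mu^{k-j}\le\mu^{-1}\mu^{k+1-j}$ and the slack $\mu^k\le\sum_{j=1}^{k+1}\mu^k$. The only (immaterial) difference is that the paper bounds the absolute difference by $(f(x^k)-f_{\rm min})+(f(x^{k+1})-f_{\rm min})$ and applies the first display to both terms, whereas you bound the increase directly by $\delta\|e^{k+1}\|_2^2$ from the recurrence; both yield a valid $\beta$ of the same shape.
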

Corollary \ref{cor:fn-root} shows that the rate at which the objective values $\{f(x^k)\}_{k\ge0}$ converge to the optimal value $f_{\rm min}$ is largely determined by the rate at which the norms of the error vectors $\{e^k\}_{k\ge1}$ decrease to zero.  However, since the objective function $f$ is not necessarily strongly convex, the convergence rate of the objective values does not automatically translate into the convergence rate of the iterates.  The following result shows how the latter can be determined by utilizing Proposition \ref{prop:cost-to-go}, Corollary \ref{cor:iter-bd}(a) and Corollary \ref{cor:fn-root}:
\begin{thm} \label{thm:conv-rate-gen}
Under the setting of Proposition \ref{prop:cost-to-go}, there exist $\lambda_1,\lambda_2 > 0$, which do not depend on the realization of $\{e^k\}_{k\ge1}$, such that
\begin{equation} \label{eq:iter-conv}
\|x^k - x^{k+1}\|_2 \le \lambda_1 \left[ \sum_{j=1}^{k+1} \mu^{(k+1-j)/2} \|e^j\|_2 + \left( \frac{1+\mu}{2} \right)^{k/2} \right]
\end{equation}
and 
\begin{equation} \label{eq:dist-to-opt}
\mbox{dist}(x^k,\mathcal{X}) \le \lambda_2 \left[ \sum_{j=1}^{k+1} \mu^{(k+1-j)/2} \|e^j\|_2 + \left( \frac{1+\mu}{2} \right)^{k/2} \right]
\end{equation}
for all $k\ge0$, where $\mu \in (0,1)$ is given by (\ref{eq:parameter}).
\end{thm}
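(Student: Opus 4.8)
The plan is to establish the iterate bound (\ref{eq:iter-conv}) first and then deduce the distance bound (\ref{eq:dist-to-opt}) from it via the error bound (\ref{ieq:err-bd}). The two ingredients I would combine are Corollary~\ref{cor:iter-bd}(a), which controls $\|x^k-x^{k+1}\|_2^2$ by the per--step objective decrease plus an error term, and the first bound of Corollary~\ref{cor:fn-root}, which already provides a geometric--plus--error estimate on $f(x^k)-f_{\rm min}$. The argument is essentially bookkeeping: convert a \emph{squared} recurrence into its square--root ($\ell_1$--in--the--errors) form and reindex the geometric factors so that the exponents match the stated $\mu^{(k+1-j)/2}$ pattern.

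First I would start from Corollary~\ref{cor:iter-bd}(a). The key elementary observation is that the signed decrease satisfies $f(x^k)-f(x^{k+1}) = \left(f(x^k)-f_{\rm min}\right)-\left(f(x^{k+1})-f_{\rm min}\right)\le f(x^k)-f_{\rm min}$, since $f(x^{k+1})-f_{\rm min}\ge 0$ by Corollary~\ref{cor:iter-bd}(b). Substituting this into Corollary~\ref{cor:iter-bd}(a) yields
\[
\|x^k-x^{k+1}\|_2^2 \le \frac{4}{L_f}\left(f(x^k)-f_{\rm min}\right) + \frac{4}{L_f^2}\|e^{k+1}\|_2^2 ,
\]
and I would then feed in $f(x^k)-f_{\rm min}\le \mu^k\left(f(x^0)-f_{\rm min}\right)+\delta\sum_{j=1}^k \mu^{k-j}\|e^j\|_2^2$ from Corollary~\ref{cor:fn-root}. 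This produces an upper bound on $\|x^k-x^{k+1}\|_2^2$ consisting of a $\mu^k$ geometric term, a weighted error sum $\sum_{j=1}^k \mu^{k-j}\|e^j\|_2^2$, and a $\|e^{k+1}\|_2^2$ term, all nonnegative.

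Next I would take square roots and use subadditivity, $\sqrt{\sum_i a_i}\le\sum_i\sqrt{a_i}$ (equivalently $\|v\|_2\le\|v\|_1$ applied to the vector with entries $\mu^{(k-j)/2}\|e^j\|_2$); this is what lands me on the $\ell_1$--type error sum in the statement, so Cauchy--Schwarz would be the wrong tool here. The $\mu^k$ term becomes $\mu^{k/2}$ and the weighted error sum becomes $\sum_{j=1}^k \mu^{(k-j)/2}\|e^j\|_2$. Writing $\mu^{(k-j)/2}=\mu^{-1/2}\mu^{(k+1-j)/2}$ and recognizing the stray $\|e^{k+1}\|_2$ as the $j=k+1$ summand (weight $\mu^0$), all error contributions fold into the single sum $\sum_{j=1}^{k+1}\mu^{(k+1-j)/2}\|e^j\|_2$, with the constants $L_f,\delta,\mu^{-1/2},f(x^0)-f_{\rm min}$ absorbed into $\lambda_1$. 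Finally, since $\mu\in(0,1)$ we have $\mu\le (1+\mu)/2$, hence $\mu^{k/2}\le\left((1+\mu)/2\right)^{k/2}$, which matches (\ref{eq:iter-conv}). (If one instead uses the second, absolute--difference bound of Corollary~\ref{cor:fn-root} in place of the cost--to--go step, a term $(k+1)\mu^k$ appears, and the base $(1+\mu)/2$ then arises naturally from dominating this polynomial--times--geometric quantity by a pure geometric one.)

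For (\ref{eq:dist-to-opt}) I would simply apply the error bound (\ref{ieq:err-bd}), $\mbox{dist}(x^k,\mathcal{X})\le\kappa\left(\|x^k-x^{k+1}\|_2+\|e^{k+1}\|_2\right)$, substitute the just--proved bound on $\|x^k-x^{k+1}\|_2$, and note that the extra $\|e^{k+1}\|_2$ is again the $j=k+1$ term of the error sum; this gives (\ref{eq:dist-to-opt}) with $\lambda_2=\kappa(\lambda_1+1)$. I do not expect a serious obstacle: the only points requiring care are the reindexing that aligns the half--integer exponents and the decision to pass from the squared inequality to its square root through subadditivity, so that the resulting bound has precisely the $\ell_1$ error structure appearing in the theorem.
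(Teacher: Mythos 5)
Your proposal is correct and follows the same skeleton as the paper's proof: bound $\|x^k-x^{k+1}\|_2^2$ by combining Corollary~\ref{cor:iter-bd}(a) with Corollary~\ref{cor:fn-root}, pass to square roots via $\sqrt{a+b}\le\sqrt{a}+\sqrt{b}$, reindex so the stray $\|e^{k+1}\|_2$ becomes the $j=k+1$ summand, and then obtain (\ref{eq:dist-to-opt}) from (\ref{ieq:err-bd}) with $\lambda_2=\kappa(1+\lambda_1)$, exactly as in the paper. The one substantive difference is which estimate from Corollary~\ref{cor:fn-root} you invoke: the paper bounds $|f(x^k)-f(x^{k+1})|$ by the sum of the two cost--to--go terms and therefore inherits a $(k+1)\mu^k$ factor, which it must dominate by $\gamma\left((1+\mu)/2\right)^k$; you instead use the one--sided inequality $f(x^k)-f(x^{k+1})\le f(x^k)-f_{\rm min}$ (valid since the decrease enters Corollary~\ref{cor:iter-bd}(a) with a positive coefficient) together with the first estimate of Corollary~\ref{cor:fn-root}, which avoids the polynomial factor entirely and yields the slightly sharper geometric base $\mu^{k/2}$, of which the stated $\left((1+\mu)/2\right)^{k/2}$ is a weakening. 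Your constants depend only on $L_f$, $\delta$, $\mu$, and $f(x^0)-f_{\rm min}$ (deterministic since $x^0$ is), so the realization--independence claim also goes through.
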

\begin{proof}
By Corollary \ref{cor:iter-bd}(a) and Corollary \ref{cor:fn-root}, we have
\begin{eqnarray*}
\|x^k - x^{k+1}\|_2^2 &\le& \frac{4}{L_f}\left( \left| f(x^k) - f(x^{k+1}) \right| + \frac{1}{L_f} \|e^{k+1}\|_2^2 \right) \\
\noalign{\medskip}
&\le& \frac{4}{L_f}\left[ \beta \sum_{j=1}^{k+1} \left( \mu^{k+1-j} \|e^j\|_2^2 + \mu^k \right) + \frac{1}{L_f}\|e^{k+1}\|_2^2 \right] \\
\noalign{\medskip}
&\le& \frac{4}{L_f} \left[ \left( \beta + \frac{1}{L_f} \right) \sum_{j=1}^{k+1} \mu^{k+1-j} \|e^j\|_2^2 + \beta(k+1)\mu^k \right].
\end{eqnarray*}
Since there exists a $\gamma>0$ such that $(k+1)\mu^k \le \gamma((1+\mu)/2)^k$ for all $k\ge0$, we see that
$$ \|x^k - x^{k+1}\|_2^2 \le \lambda_1^2 \left[ \sum_{j=1}^{k+1} \mu^{k+1-j} \|e^j\|_2^2 + \left( \frac{1+\mu}{2} \right)^k \right], $$
where $\lambda_1^2 = (4/L_f)\max\{ \beta+(1/L_f), \beta\gamma \}$.  The desired result (\ref{eq:iter-conv}) then follows from the fact that $\sqrt{a+b} \le \sqrt{a}+\sqrt{b}$ for all $a,b\ge0$.

Now, using (\ref{ieq:err-bd}) and (\ref{eq:iter-conv}), we obtain
\begin{eqnarray*}
\mbox{dist}(x^k,\mathcal{X}) &\le& \kappa \left( \|x^k - x^{k+1}\|_2 + \|e^{k+1}\|_2 \right) \\
\noalign{\medskip}
&\le& \kappa \left( \lambda_1 \left[ \sum_{j=1}^{k+1} \mu^{(k+1-j)/2} \|e^j\|_2 + \left( \frac{1+\mu}{2} \right)^{k/2} \right] + \|e^{k+1}\|_2 \right) \\
\noalign{\medskip}
&\le& \lambda_2 \left[ \sum_{j=1}^{k+1} \mu^{(k+1-j)/2} \|e^j\|_2 + \left( \frac{1+\mu}{2} \right)^{k/2} \right], 
\end{eqnarray*}
where $\lambda_2 = \kappa(1+\lambda_1)$.  This establishes (\ref{eq:dist-to-opt}), and the proof of Theorem \ref{thm:conv-rate-gen} is completed.
\end{proof}

\medskip
From Theorem \ref{thm:conv-rate-gen}, we see that the rate at which the norms of the error vectors $\{e^k\}_{k\ge1}$ decrease to zero again plays an important role---this time in determining the rate at which the iterates $\{x^k\}_{k\ge0}$ converge to an element in the optimal set $\mathcal{X}$.  As a direct application of Corollary~\ref{cor:fn-root} and Theorem~\ref{thm:conv-rate-gen}, we have the following corollary, whose proof can be found in Appendix \ref{app:iter-conv}:
\begin{coro} \label{cor:iter-conv} Consider the setting of Proposition \ref{prop:cost-to-go}.
\begin{enumerate}
\item[\subpb] (Sublinear Convergence) Suppose that for some $\rho>0$, we have $\|e^k\|_2^2 \le B_k = O\left( 1/k^{1+\rho} \right)$ for all $k\ge1$.  Then, the sequence of iterates $\{x^k\}_{k\ge0}$ satisfies
$$ f(x^{k+1}) - f_{\rm min} \le O\left( \frac{1}{(k+1)^{1+\rho}} \right) $$
and
$$ \|x^k - x^{k+1}\|_2 \le O\left( \frac{1}{(k+1)^{(1+\rho)/2}} \right), \quad \mbox{dist}(x^k,\mathcal{X}) \le O\left( \frac{1}{(k+1)^{(1+\rho)/2}} \right) $$
for all $k\ge0$.  In particular, the sequence $\{f(x^k)\}_{k\ge0}$ (resp.~$\{x^k\}_{k\ge0}$) converges at least sublinearly to $f_{\rm min}$ (resp.~an element in $\mathcal{X}$).

\item[\subpb] (Linear Convergence) Suppose that for some $\rho \in (0,1)$, we have $\|e^k\|_2^2 \le B_k = O(\rho^k)$ for all $k\ge1$.  Then, there exists a $c \in (0,1)$ such that the sequence of iterates $\{x^k\}_{k\ge0}$ satisfies
$$ f(x^{k+1}) - f_{\rm min} \le O( c^{2(k+1)} ) $$
and
$$ \|x^k - x^{k+1}\|_2 \le O(c^k), \quad \mbox{dist}(x^k,\mathcal{X}) \le O(c^k) $$
for all $k\ge0$.  In particular, the sequence $\{f(x^k)\}_{k\ge0}$ (resp.~$\{x^k\}_{k\ge0}$) converges at least linearly to $f_{\rm min}$ (resp.~an element in $\mathcal{X}$).
\end{enumerate}
\resetspb
\end{coro}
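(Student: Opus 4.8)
The plan is to observe that both parts of the corollary follow by substituting the hypothesized decay rates for $\{\|e^k\|_2^2\}_{k\ge1}$ into the bounds of Corollary~\ref{cor:fn-root} and Theorem~\ref{thm:conv-rate-gen}, and then estimating the resulting discrete convolutions of a geometric sequence against the error bounds. Concretely, every quantity of interest reduces to controlling sums of the form $\sum_{j=1}^{K}\theta^{K-j}a_j$, where $\theta\in(0,1)$ is a fixed geometric ratio (namely $\mu$ for the objective values, coming from Corollary~\ref{cor:fn-root}, and $\sqrt{\mu}$ for the iterates and distances, coming from Theorem~\ref{thm:conv-rate-gen}), and $a_j$ is either $B_j$ or $\sqrt{B_j}$. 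Thus the first and main step is to prove a general convolution lemma: if $\theta\in(0,1)$, then (i) $a_j=O(1/j^p)$ implies $\sum_{j=1}^K\theta^{K-j}a_j=O(1/K^p)$ for any $p>0$, and (ii) $a_j=O(\rho^j)$ with $\rho\in(0,1)$ implies $\sum_{j=1}^K\theta^{K-j}a_j=O(\eta^K)$ for some $\eta\in(0,1)$.

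For claim~(i) I would split the sum at $j=\lceil K/2\rceil$. On the upper range $j>K/2$ one has $a_j=O((2/K)^p)$ while $\sum_{j>K/2}\theta^{K-j}\le(1-\theta)^{-1}$, so this block contributes $O(1/K^p)$; on the lower range the factor $\theta^{K-j}\le\theta^{K/2}$ decays geometrically and dominates the at-most-polynomial growth of $\sum_{j\le K/2}a_j$, so this block is $o(1/K^p)$ (indeed exponentially small). This argument is insensitive to whether $p>1$ or $p\le1$, which matters because the objective bound uses $p=1+\rho>1$ whereas the iterate and distance bounds use the exponent $(1+\rho)/2$, which falls below $1$ when $\rho<1$. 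For claim~(ii) I would write $\sum_{j=1}^K\theta^{K-j}\rho^j=\theta^K\sum_{j=1}^K(\rho/\theta)^j$ and treat the three cases $\rho<\theta$, $\rho>\theta$, $\rho=\theta$ separately; in each case the sum is $O(\eta^K)$ with $\eta=\max\{\theta,\rho\}$, except in the resonant case $\rho=\theta$ where the sum equals $K\theta^K$ and one absorbs the linear factor by passing to any $\eta\in(\theta,1)$.

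With the lemma in hand the corollary is immediate. For part~(a), applying Corollary~\ref{cor:fn-root} at index $k+1$ and using $\|e^j\|_2^2\le B_j=O(1/j^{1+\rho})$ together with claim~(i) (with $p=1+\rho$) gives $f(x^{k+1})-f_{\rm min}=O(1/(k+1)^{1+\rho})$, the geometric head term $\mu^{k+1}(f(x^0)-f_{\rm min})$ being negligible. Applying Theorem~\ref{thm:conv-rate-gen} with $\|e^j\|_2\le\sqrt{B_j}=O(1/j^{(1+\rho)/2})$ and claim~(i) (now with $\theta=\sqrt{\mu}$ and $p=(1+\rho)/2$), while noting that the residual $((1+\mu)/2)^{k/2}$ is geometric and hence $O(1/(k+1)^{(1+\rho)/2})$, yields the stated bounds on $\|x^k-x^{k+1}\|_2$ and $\mbox{dist}(x^k,\mathcal{X})$. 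For part~(b) one proceeds identically but invokes claim~(ii): with $B_j=O(\rho^j)$ the objective bound becomes $O(\eta^{k+1})$ for some $\eta\in(0,1)$, and writing $\eta=c^2$ gives the form $O(c^{2(k+1)})$; for the iterates one uses $\sqrt{B_j}=O((\sqrt{\rho})^j)$ with $\theta=\sqrt{\mu}$ and combines the resulting $O(\eta^{k+1})$ with the geometric residual to obtain $O(c^k)$ after possibly enlarging $c$.

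The main obstacle is really only the polynomial convolution estimate in claim~(i): the geometric-versus-polynomial balance must be handled by the split at $K/2$, and one must verify that it remains valid for exponents below $1$; everything else is bookkeeping, including the harmless index shift $k\mapsto k+1$ and the repeated use of $\sqrt{a+b}\le\sqrt{a}+\sqrt{b}$ already exploited in Theorem~\ref{thm:conv-rate-gen}. A secondary, minor subtlety is the resonant case $\rho=\mu$ (and $\rho=\sqrt{\mu}$ after taking square roots) in claim~(ii), where the convolution carries an extra linear factor that must be absorbed into a slightly larger geometric rate.
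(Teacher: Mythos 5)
Your proposal is correct, and it follows the same overall strategy as the paper: substitute the decay hypotheses into Corollary~\ref{cor:fn-root} and Theorem~\ref{thm:conv-rate-gen} and control the resulting geometric-against-polynomial (resp.\ geometric-against-geometric) convolutions. The execution differs in two respects. First, for the polynomial convolution $\sum_{j=1}^K\theta^{K-j}j^{-p}=O(K^{-p})$ the paper argues by induction on the one-step recursion $S_{k+1}=\mu S_k+(k+1)^{-(1+\rho)}$, choosing $K$ so that $\mu(1+1/k)^{1+\rho}<1$ beyond $K$ and a sufficiently large constant $C$; you instead split the sum at $j=\lceil K/2\rceil$ and let the geometric tail kill the lower block. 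Both are sound; your split argument is perhaps more transparently robust to exponents $p\le 1$, though the paper's induction in fact also goes through for any $p>0$. Second, the paper sidesteps the sub-unit exponent issue entirely by working with the \emph{squared} intermediate bounds from the proof of Theorem~\ref{thm:conv-rate-gen} (i.e.\ $\|x^k-x^{k+1}\|_2^2\le\lambda_1^2[\sum_j\mu^{k+1-j}\|e^j\|_2^2+((1+\mu)/2)^k]$), so that the only convolution ever needed has ratio $\mu$ and exponent $1+\rho>1$, with square roots taken only at the very end; you apply the unsquared final statement of Theorem~\ref{thm:conv-rate-gen}, which forces you to handle ratio $\sqrt{\mu}$ and exponent $(1+\rho)/2$, possibly below $1$ --- a case you correctly anticipate and cover. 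Your treatment of the resonant case $\rho=\theta$ in the geometric convolution matches the paper's device of absorbing the linear factor $k\,c_1^k$ into a slightly larger rate $c_2=(1+c_1)/2$; just make sure, as you indicate, that the final $c$ is taken large enough to serve simultaneously for the objective bound $O(c^{2(k+1)})$ and the iterate and distance bounds $O(c^k)$.
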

In the context of inexact gradient methods, Corollary \ref{cor:iter-conv} extends the results of Schmidt et al.~\cite{SLRB11} and Friedlander and Schmidt~\cite{FS12} in two ways.  First, it shows that when applied to the structured convex optimization problem~(\ref{basic-problem}), the IGM (\ref{eq:update}) can achieve an $O(1/k^2)$ convergence rate for the sequence $\{f(x^k)-f_{\rm min}\}_{k\ge0}$ even when the error norms $\{\|e^k\|_2\}_{k\ge1}$ decrease at an $O(1/k)$ rate.  This should be contrasted with the case of a general convex optimization problem, for which the IGM (\ref{eq:update}) is only known to achieve an $O(\log^2k/k)$ convergence rate for the sequence $\{\min_{0\le j\le k} f(x^j) - f_{\rm min}\}_{k\ge0}$~\cite[Proposition~1]{SLRB11}.  Secondly, our analysis shows that even when the objective function $f$ is not strongly convex, it is possible to establish a sublinear (resp.~linear) convergence rate for the sequence of iterates $\{x^k\}_{k\ge0}$, provided that the error norms $\{\|e^k\|_2\}_{k\ge1}$ decrease to zero at a sublinear (resp.~linear) rate.

\medskip
\noindent{\bf Remarks.} Since the bounds in Corollary \ref{cor:fn-root} and Theorem \ref{thm:conv-rate-gen} hold for every realization of the error sequence $\{e^k\}_{k\ge1}$, they also hold in expectation.  Thus, we can derive bounds on the expected convergence rates of $\{f(x^k)\}_{k\ge0}$ and $\{x^k\}_{k\ge0}$ whenever bounds on $\{ \E{\|e^k\|_2^2} \}_{k\ge1}$ are available.  As an illustration, we have the following extension of Corollary \ref{cor:iter-conv}:
\begin{coro} \label{cor:iter-conv-stoch} Consider the setting of Proposition \ref{prop:cost-to-go}.
\begin{enumerate}
\item[\subpb] (Expected Sublinear Convergence) Suppose that for some $\rho>0$, we have $\E{\|e^k\|_2^2} \le B_k = O\left( 1/k^{1+\rho} \right)$ for all $k\ge1$.  Then, the sequence of iterates $\{x^k\}_{k\ge0}$ satisfies
$$ \E{ f(x^{k+1}) - f_{\rm min} } \le O\left( \frac{1}{(k+1)^{1+\rho}} \right) $$
and
$$ \E{ \|x^k - x^{k+1}\|_2 } \le O\left( \frac{1}{(k+1)^{(1+\rho)/2}} \right), \quad \E{ \mbox{dist}(x^k,\mathcal{X}) } \le O\left( \frac{1}{(k+1)^{(1+\rho)/2}} \right) $$
for all $k\ge0$.

\item[\subpb] (Expected Linear Convergence) Suppose that for some $\rho \in (0,1)$, we have $\E{ \|e^k\|_2^2 } \le B_k = O(\rho^k)$ for all $k\ge1$.  Then, there exists a $c \in (0,1)$ such that the sequence of iterates $\{x^k\}_{k\ge0}$ satisfies
$$ \E{ f(x^{k+1}) - f_{\rm min} } \le O( c^{2(k+1)} ) $$
and
$$ \E{ \|x^k - x^{k+1}\|_2 } \le O(c^k), \quad \E{ \mbox{dist}(x^k,\mathcal{X}) } \le O(c^k) $$
for all $k\ge0$.
\end{enumerate}
\end{coro}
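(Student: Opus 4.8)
The plan is to reduce this stochastic corollary to its deterministic counterpart, Corollary~\ref{cor:iter-conv}, by exploiting the observation recorded in the Remarks: the bounds in Corollary~\ref{cor:fn-root} and Theorem~\ref{thm:conv-rate-gen} hold for \emph{every} realization of $\{e^k\}_{k\ge1}$, so taking expectations on both sides and invoking the linearity and monotonicity of $\E{\cdot}$ preserves them. First I would take expectations of the objective-value bound from Corollary~\ref{cor:fn-root} to obtain
\begin{equation*}
\E{ f(x^k) - f_{\rm min} } \le \mu^k\left( f(x^0) - f_{\rm min} \right) + \delta \sum_{j=1}^k \mu^{k-j}\,\E{ \|e^j\|_2^2 } \le \mu^k\left( f(x^0) - f_{\rm min} \right) + \delta \sum_{j=1}^k \mu^{k-j} B_j,
\end{equation*}
which is exactly the deterministic bound with $\|e^j\|_2^2$ replaced by $B_j$. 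Consequently, the estimation of the weighted sum $\sum_{j} \mu^{k-j} B_j$ proceeds verbatim as in the proof of Corollary~\ref{cor:iter-conv}, giving the claimed rates for $\E{ f(x^{k+1}) - f_{\rm min} }$ in both parts (a) and (b).

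For the iterate estimates I would take expectations of~(\ref{eq:iter-conv}) and~(\ref{eq:dist-to-opt}) from Theorem~\ref{thm:conv-rate-gen}, yielding
\begin{equation*}
\E{ \|x^k - x^{k+1}\|_2 } \le \lambda_1 \left[ \sum_{j=1}^{k+1} \mu^{(k+1-j)/2}\,\E{ \|e^j\|_2 } + \left( \frac{1+\mu}{2} \right)^{k/2} \right],
\end{equation*}
together with the analogous inequality for $\E{ \mbox{dist}(x^k,\mathcal{X}) }$.

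The one genuinely new ingredient---and the step I expect to be the main obstacle---is reconciling the mismatch in powers: the iterate bounds involve the \emph{first} moment $\E{ \|e^j\|_2 }$, whereas the hypotheses control only the \emph{second} moment $\E{ \|e^j\|_2^2 }$. I would bridge this gap with Jensen's inequality (equivalently, Cauchy--Schwarz applied to $\|e^j\|_2 \cdot 1$): since $t \mapsto t^2$ is convex,
\begin{equation*}
\E{ \|e^j\|_2 } \le \left( \E{ \|e^j\|_2^2 } \right)^{1/2} \le \sqrt{B_j}.
\end{equation*}
Substituting this bound turns the expected iterate inequalities into precisely the deterministic inequalities used in Corollary~\ref{cor:iter-conv}, where $\|e^j\|_2 \le \sqrt{B_j}$ played the identical role. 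In part (a) we have $\sqrt{B_j} = O( 1/j^{(1+\rho)/2} )$ and in part (b) $\sqrt{B_j} = O( (\sqrt{\rho})^j )$, so the remaining convolution estimates for $\sum_{j} \mu^{(k+1-j)/2}\sqrt{B_j}$ are identical to those already carried out, and the stated rates follow. The only subtlety worth flagging is to ensure that each pathwise inequality is of the form \textit{(nonnegative quantity)} $\le$ \textit{(sum of nonnegative terms)} before taking expectations, so that monotonicity of $\E{\cdot}$ applies cleanly; this holds throughout, since all the bounds in Corollary~\ref{cor:fn-root} and Theorem~\ref{thm:conv-rate-gen} are of exactly this type.
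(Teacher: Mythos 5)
Your proposal is correct and follows essentially the same route as the paper: take expectations of the pathwise bounds from Corollary~\ref{cor:fn-root} and Theorem~\ref{thm:conv-rate-gen}, substitute the second--moment hypotheses $\E{\|e^j\|_2^2}\le B_j$, and reuse the convolution estimates from the proof of Corollary~\ref{cor:iter-conv}. The only (immaterial) difference is where the first--/second--moment gap is bridged: the paper takes expectations of the \emph{squared} iterate bounds, so that $\E{\|e^j\|_2^2}$ appears directly, and applies $\E{X}\le\left(\E{X^2}\right)^{1/2}$ to $\|x^k-x^{k+1}\|_2$ and $\mbox{dist}(x^k,\mathcal{X})$ only at the very end, whereas you apply Jensen term by term to each $\E{\|e^j\|_2}$ --- both deliver the stated rates.
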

\resetspb
The proof of Corollary \ref{cor:iter-conv-stoch} can be found in Appendix \ref{app:iter-conv-stoch}.

\section{Applications to Data Fitting Problems}
Let us now apply the results in the previous section to analyze an incremental gradient method for solving least squares and logistic regression problems.  From the update formula~(\ref{eq:IncGM-update}), we see that the following approximation of $\nabla f(x^k)$ is used in the $k$--th iteration:
$$ G_k = \frac{1}{|I_k|} \sum_{i \in I_k} \nabla f_i(x^k). $$
Here, $I_k \subset \mathscr{M} \equiv \{1,2,\ldots,M\}$ is an index set that is chosen according to some pre--specified rule and corresponds to a subset of the samples.  Since both the least squares~(\ref{eq:lsq}) and logistic regression~(\ref{eq:logistic}) problems are of the form~(\ref{eq:finite-sum}), the error vector $e^{k+1}$ in the $k$--th iteration is given by
\begin{equation} \label{eq:incre-err}
e^{k+1} = G_k - \nabla f(x^k) = \frac{M - |I_k|}{M|I_k|} \sum_{i \in I_k} \nabla f_i(x^k) - \frac{1}{M} \sum_{i \in \mathscr{M} \backslash I_k} \nabla f_i(x^k).
\end{equation}
If we form $I_k$ by sampling a fixed number of elements from $\mathscr{M}$ uniformly without replacement and the sampling is done independent of $I_0,I_1,\ldots,I_{k-1}$ for all $k\ge0$, then we also have 
\begin{equation} \label{eq:incre-err-stoch}
\E{ \|e^{k+1}\|_2^2 \,\big|\, \mathscr{F}_k } = \left( \frac{M - |I_k|}{M|I_k|} \right) \left( \frac{1}{M-1} \sum_{i=1}^M \|\nabla f_i(x^k) - \nabla f(x^k)\|_2^2 \right)
\end{equation}
for all $k\ge0$, where $\mathscr{F}_k$ is the $\sigma$--algebra generated by $e^1,e^2,\ldots,e^k$ with $\mathscr{F}_0 = \emptyset$; cf.~\cite[Section 3.2]{FS12}.


\subsection{Least Squares Regression}
Recall that for the least squares regression problem~(\ref{eq:lsq}), we have $f_i(x) = (a_i^Tx-b_i)^2$ for $i=1,2,\ldots,M$.  Moreover, both Assumptions \ref{ass:general} and \ref{ass:non-empty} are satisfied and scenario (S1) holds.  Thus, in order to apply the convergence rate results in Section~\ref{sec:conv-anal}, it remains to bound the error norms $\{\|e^k\|_2\}_{k\ge1}$.  Assuming that the samples $\{a_i\}_{i=1}^M$ are uniformly bounded---i.e., there exists an $R>0$ such that $\max_{1\le i\le M} \|a_i\|_2 \le R$---we use (\ref{eq:incre-err}) to compute
\begin{eqnarray*}
 \|e^{k+1}\|_2^2 &\le& \left( \frac{M - |I_k|}{M|I_k|} \sum_{i \in I_k} \| \nabla f_i(x^k) \|_2 + \frac{1}{M} \sum_{i \in \mathscr{M} \backslash I_k} \| \nabla f_i(x^k) \|_2 \right)^2 \\
\noalign{\medskip}
&\le& \left( \frac{M - |I_k|}{M} \right)^2 \left[ \sqrt{ \frac{1}{|I_k|} \sum_{i\in I_k} \| \nabla f_i(x^k) \|_2^2 } + \sqrt{ \frac{1}{M - |I_k|} \sum_{i\in \mathscr{M} \backslash I_k} \| \nabla f_i(x^k) \|_2^2 } \right]^2 \\
\noalign{\medskip}
&\le& 8 \left( \frac{M - |I_k|}{M} \right)^2 \left( \frac{1}{|I_k|} \sum_{i \in I_k} \left( a_i^Tx^k - b_i \right)^2 \cdot \|a_i\|_2^2 \right. \\
\noalign{\medskip}
& & \qquad\qquad\qquad\qquad \left. + \frac{1}{M-|I_k|} \sum_{i \in \mathscr{M} \backslash I_k} \left( a_i^Tx^k - b_i \right)^2 \cdot \|a_i\|_2^2 \right) \\
\noalign{\medskip}
&\le& 8R^2 \left( \frac{M - |I_k|}{M} \right)^2 \left[ \frac{1}{|I_k|} \sum_{i \in I_k} \left( a_i^Tx^k - b_i \right)^2 + \frac{1}{M-|I_k|} \sum_{i \in \mathscr{M} \backslash I_k} \left( a_i^Tx^k - b_i \right)^2 \right],
\end{eqnarray*}
where the second inequality follows from the concavity of $x \mapsto \sqrt{x}$ and Jensen's inequality; the third inequality follows from the fact that $(a+b)^2 \le 2(a^2+b^2)$ for all $a,b\ge0$, and
$$ \nabla f_i(x) = 2\left( a_i^Tx - b_i \right) a_i \quad\mbox{for } i=1,2,\ldots,M. $$
Now, observe that for $M/2 \le |I_k| \le M$, we have
\begin{eqnarray*}
& & \frac{1}{|I_k|} \sum_{i \in I_k} \left( a_i^Tx^k - b_i \right)^2 + \frac{1}{M-|I_k|} \sum_{i \in \mathscr{M} \backslash I_k} \left( a_i^Tx^k - b_i \right)^2 \\
\noalign{\medskip}
&=& \frac{M}{M-|I_k|}f(x^k) + \left( \frac{1}{|I_k|} - \frac{1}{M-|I_k|} \right)  \sum_{i \in I_k} \left( a_i^Tx^k - b_i \right)^2 \\
\noalign{\medskip}
&\le& \frac{M}{M-|I_k|}f(x^k).
\end{eqnarray*}
It follows that for all $k\ge0$ with $M/2 \le |I_k| \le M$, 
$$ \|e^{k+1}\|_2^2 \le 8R^2 \frac{M-|I_k|}{M} f(x^k) = 8R^2\frac{M-|I_k|}{M} \left[ \left( f(x^k) - f_{\rm min} \right) + f_{\rm min} \right]. $$ 
Since the premises of Proposition \ref{prop:cost-to-go} are satisfied, the above inequality and (\ref{ieq:decrease}) together imply that for all $k\ge0$ with $M/2 \le |I_k| \le M$, 
\begin{eqnarray}
f(x^{k+1}) - f_{\rm min} &\le& \mu \left( f(x^k) - f_{\rm min} \right) + 8\delta R^2\frac{M-|I_k|}{M} \left[ \left( f(x^k) - f_{\rm min} \right) + f_{\rm min} \right] \nonumber \\
\noalign{\medskip}
&=& \left( \mu + 8\delta R^2 \frac{M-|I_k|}{M} \right)\left( f(x^k) - f_{\rm min} \right) + 8\delta R^2 f_{\rm min} \frac{M-|I_k|}{M}. \label{eq:lsq-recur}
\end{eqnarray}
Let
$$ \bar{\delta} = 8 \delta R^2f_{\rm min} \quad\mbox{and}\quad E_{k+1} = \frac{M-|I_k|}{M}, \quad \mu_{k+1} = \mu + 8\delta R^2E_{k+1} \quad\mbox{for }k=0,1,\ldots. $$
By applying (\ref{eq:lsq-recur}) recursively, we have
\begin{equation} \label{eq:lsq-rate}
f(x^{k+1}) - f_{\rm min} \le \left( \prod_{j=1}^{k+1} \mu_j \right) \left( f(x^0) - f_{\rm min} \right) + \bar{\delta} \left[ \sum_{j=1}^k \left( \prod_{i=j+1}^{k+1} \mu_i \right) E_j + E_{k+1} \right]
\end{equation}
for all $k\ge0$.  Now, suppose that the sets $\{I_k\}_{k\ge0}$ satisfy $M/2 \le |I_0| \le |I_1| \le \cdots$ and (ii) $\mu_1 \in (0,1)$.  Then, we have $\mu_1 \ge \mu_2 \ge \cdots$, and it follows from (\ref{eq:lsq-rate}) that for all $k\ge0$,
$$ f(x^{k+1}) - f_{\rm min} \le \mu_1^{k+1} \left( f(x^0) - f_{\rm min} \right) + \bar{\delta} \sum_{j=1}^{k+1} \mu_1^{k+1-j} E_j. $$
In particular, as long as $\{E_k\}_{k\ge1}$ decreases (sub)linearly to zero, we can use the arguments in the proofs of Theorem~\ref{thm:conv-rate-gen} and Corollary~\ref{cor:iter-conv} to show that $\{f(x^k)\}_{k\ge0}$ (resp.~$\{x^k\}_{k\ge0}$) converges at least (sub)linearly to $f_{\rm min}$ (resp.~an element in $\mathcal{X}$).  

\medskip
\noindent{\bf Remark.}  The assumptions $|I_0| \ge M/2$ and $\mu_1 \in (0,1)$ are only made for the sake of simplicity and can be dropped altogether.  Indeed, as long as $\{E_k\}_{k\ge1}$ decreases to zero, there will be an index $K\ge1$ such that $|I_k| \ge M/2$ and $\mu_k \in (0,1)$ for all $k \ge K$.  Hence, one can still derive the desired convergence rate results using (\ref{eq:lsq-rate}).

Now, suppose that the sets $\{I_k\}_{k\ge0}$ are obtained via uniform sampling from $\mathscr{M}$ without replacement.  Then, by (\ref{eq:incre-err-stoch}) and the assumption that $\max_{1\le i\le M} \|a_i\|_2 \le R$ for some $R>0$, we have
\begin{eqnarray*}
\E{ \|e^{k+1}\|_2^2 \,\big|\, \mathscr{F}_k } &\le& \frac{M - |I_k|}{M|I_k|} \left[ \frac{1}{M-1} \sum_{i=1}^M \left( \|\nabla f_i(x^k)\|_2 + \|\nabla f(x^k)\|_2 \right)^2 \right] \\
\noalign{\medskip}
&\le& \frac{M - |I_k|}{M|I_k|} \left[ \frac{1}{M-1} \sum_{i=1}^M \left( 2R\left| a_i^Tx^k-b_i \right| + \frac{1}{M}  \sum_{j=1}^M \left\| \nabla f_j(x^k) \right\|_2 \right)^2 \right] \\
\noalign{\medskip}
&\le& \frac{M - |I_k|}{M|I_k|} \left[ \frac{1}{M-1} \sum_{i=1}^M \left( 2R\left| a_i^Tx^k-b_i \right| + \sqrt{ \frac{1}{M} \sum_{j=1}^M \left\| \nabla f_j(x^k) \right\|_2^2 } \right)^2 \right] \\
\noalign{\medskip}
&\le& \frac{M - |I_k|}{M|I_k|} \left[ \frac{8R^2}{M-1} \sum_{i=1}^M \left( \left( a_i^Tx^k-b_i \right)^2 + \frac{1}{M} \sum_{j=1}^M \left( a_j^Tx^k-b_j \right)^2 \right) \right] \\
\noalign{\medskip}
&=& 16R^2 \frac{M - |I_k|}{(M-1)|I_k|} \left[ \left( f(x^k) - f_{\rm min} \right) + f_{\rm min} \right].
\end{eqnarray*}
It follows from the tower property of conditional expectation that for all $k\ge0$,
$$ \E{ \|e^{k+1}\|_2^2 } \le  16R^2 \tilde{E}_{k+1} \left( \E{ f(x^k) - f_{\rm min} } + f_{\rm min} \right), $$
where $\tilde{E}_{k+1} = (M-|I_k|)/((M-1)|I_k|)$.  If in addition the sets $\{I_k\}_{k\ge0}$ satisfy $|I_0| \le |I_1| \le \cdots$ and $\tilde{\mu}_1 = \mu + 16\delta R^2 \tilde{E}_1 \in (0,1)$, then by using a similar argument as above and Corollary~\ref{cor:iter-conv-stoch}, we see that the expected rate at which $\{f(x^k)\}_{k\ge0}$ (resp.~$\{x^k\}_{k\ge0}$) converges to $f_{\rm min}$ (resp.~an element in $\mathcal{X}$) is (sub)linear, provided that $\{\tilde{E}_k\}_{k\ge1}$ decreases (sub)linearly to zero.

\subsection{Logistic Regression}
Let us now consider the logistic regression problem~(\ref{eq:logistic}), for which we have $f_i(x) = \log(1+\exp(-b_ia_i^Tx))$ for $i=1,2,\ldots,M$.  As mentioned earlier, both Assumptions \ref{ass:general} and \ref{ass:non-empty} are satisfied, and scenario (S2) holds.  To bound the error norms $\{\|e^k\|_2\}_{k\ge1}$, we first compute
$$ \nabla f_i(x) = \frac{-b_i\exp(-b_ia_i^Tx)}{1+\exp(-b_ia_i^Tx)}a_i \quad\mbox{for } i=1,2,\ldots,M. $$
Now, assuming that $\max_{1\le i\le M} \left\{ \max\{ \|a_i\|_2,|b_i| \}\right\} \le R$ for some $R>0$, we have
$$ 
 \|e^{k+1}\|_2^2 \le \left( \frac{M - |I_k|}{M|I_k|} \sum_{i \in I_k} \| \nabla f_i(x^k) \|_2 + \frac{1}{M} \sum_{i \in \mathscr{M} \backslash I_k} \| \nabla f_i(x^k) \|_2 \right)^2 \le 4R^4E_{k+1}^2,
$$
where, as before, $E_{k+1} = (M-|I_k|)/M$.  Thus, if $\{E_k^2\}_{k\ge1}$ decreases (sub)linearly to zero, then we can directly apply Corollary~\ref{cor:iter-conv} and conclude that $\{f(x^k)\}_{k\ge0}$ (resp.~$\{x^k\}_{k\ge0}$) converges at least (sub)linearly to $f_{\rm min}$ (resp.~an element in $\mathcal{X}$).

On the other hand, if the sets $\{I_k\}_{k\ge0}$ are obtained via uniform sampling from $\mathscr{M}$ without replacement, then by (\ref{eq:incre-err-stoch}) and the assumption that $\max_{1\le i\le M} \left\{ \max\{ \|a_i\|_2,|b_i| \}\right\} \le R$ for some $R>0$, we have
\begin{eqnarray*}
\E{ \|e^{k+1}\|_2^2 \,\big|\, \mathscr{F}_k } &\le& \left( \frac{M - |I_k|}{M|I_k|} \right) \left[ \frac{1}{M-1} \sum_{i=1}^M \left( \|\nabla f_i(x^k)\|_2 + \|\nabla f(x^k)\|_2 \right)^2 \right] \\
\noalign{\medskip}
&\le& 4R^4 \frac{M - |I_k|}{(M-1)|I_k|}.
\end{eqnarray*}
This implies that for all $k\ge0$,
$$ \E{ \|e^{k+1}\|_2^2 } \le 4R^4 \tilde{E}_{k+1}, $$
where, as before, $\tilde{E}_{k+1} = (M-|I_k|)/((M-1)|I_k|)$.  Hence, using Corollary~\ref{cor:iter-conv-stoch}, we see that the expected rate at which $\{f(x^k)\}_{k\ge0}$ (resp.~$\{x^k\}_{k\ge0}$) converges to $f_{\rm min}$ (resp.~an element in $\mathcal{X}$) is (sub)linear, provided that $\{\tilde{E}_k\}_{k\ge1}$ decreases (sub)linearly to zero.

\section{Concluding Remarks}
In this paper, we considered a class of structured unconstrained convex optimization problems, in which the objective function is the composition of an affine mapping with a strictly convex function that has certain smoothness and curvature properties.  This encapsulates many problems in machine learning and data fitting, such as least squares and logistic regression.  We showed that an inexact gradient method for solving the aforementioned class of problems will converge (sub)linearly if the norms of the gradient approximation errors decrease (sub)linearly to zero.  Consequently, we were able to establish the non--asymptotic linear convergence of a growing sample--size strategy proposed in~\cite{FS12} (see also~\cite{BCNW12}) for solving the least squares and logistic regression problems.  To obtain our results, we developed a so--called global error bound, which, roughly speaking, measures the distance between a point and the optimal set in terms of some easily computable quantities.  In general, error bounds are very useful for proving strong convergence rate results for a host of optimization algorithms (see, e.g.,~\cite{LT93}).  Thus, it would be interesting to see whether such an approach can be used to exploit the structure of optimization problems arising in machine learning and establish the linear convergence of some other first--order methods.

\newpage
\section*{Appendix}
\appendix

\section{Proof of Proposition \ref{prop:suff-decrease}} \label{app:pf-suff-decrease}
Since $\nabla f$ is $L_f$--Lipschitz continuous, we have
$$
 f(x^{k+1}) - f(x^k) \le \nabla f(x^k)^T(x^{k+1}-x^k) + \frac{L_f}{2}\|x^{k+1}-x^k\|_2^2;
$$
see, e.g.,~\cite{LP66}.  Using (\ref{eq:update}) and the fact that $\alpha_k=1/L_f$ for all $k\ge0$, we obtain
\begin{eqnarray*}
f(x^{k+1}) - f(x^k) &\leq&  \frac{L_f}{2}\|x^{k+1}-x^k\|_2^2 + \left(\frac{1}{\alpha_k}(x^k-x^{k+1})-e^{k+1}\right)^T(x^{k+1}-x^k) \\
\noalign{\medskip}
&\le& -\frac{L_f}{2} \|x^{k+1}-x^k\|_2^2 + \|e^{k+1}\|_2 \|x^{k+1}-x^k\|_2,
\end{eqnarray*}
as desired.

\section{Proof of Proposition \ref{prop:pre-eb}} \label{app:pre-eb}
We begin with the following result, which is known as the Hoffman error bound:
\begin{fact}\label{prop:hoff-bd}
(cf.~\cite{H52}) Let $C \in \R^{m\times n}$ and $d \in \R^m$ be given.  Suppose that the linear system
\begin{equation}\label{eq:linear-system}
Cu=d 
\end{equation}
in $u \in \R^n$ is feasible.  Then, there exists a $\theta>0$, which depends only on $C$, such that for any $x\in\R^n$, there exists an $\bar{x} \in \R^n$ satisfying (\ref{eq:linear-system}) and 
$$ \|x-\bar{x}\|_2 \le \theta\|Cx-d\|_2. $$
\end{fact}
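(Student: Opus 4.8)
The plan is to reduce this equality-constrained error bound to a statement about the smallest nonzero singular value of $C$, exploiting the fact that the solution set of $Cu=d$ is an affine subspace. First I would fix a particular solution $u_0$, which exists by the feasibility hypothesis, so that the solution set can be written as $S=u_0+\ker C$ and the residual becomes $Cx-d=C(x-u_0)$. Given an arbitrary $x\in\R^n$, I would then take $\bar{x}$ to be the orthogonal projection of $x$ onto the affine subspace $S$. Since $S$ is a translate of $\ker C$, the displacement satisfies $x-\bar{x}=P_{(\ker C)^\perp}(x-u_0)\in(\ker C)^\perp=\mathrm{range}(C^T)$, and because $\bar{x}\in S$ we have $C\bar{x}=d$, which gives the key identity $C(x-\bar{x})=Cx-C\bar{x}=Cx-d$. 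This choice of $\bar{x}$ automatically satisfies the linear system, so it remains only to control the norm $\|x-\bar{x}\|_2$.

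The remaining task is therefore to bound $\|w\|_2$ by a constant times $\|Cw\|_2$ for the vector $w:=x-\bar{x}$, which by construction lies in $\mathrm{range}(C^T)$. On this subspace $C$ acts injectively, so I would invoke the fact that the restriction of $C$ to $(\ker C)^\perp$ is bounded below: there is a constant $\theta^{-1}>0$, namely the smallest nonzero singular value of $C$, such that $\|Cw\|_2\ge\theta^{-1}\|w\|_2$ for every $w\in\mathrm{range}(C^T)$. Combining this with the identity above yields $\|x-\bar{x}\|_2\le\theta\|C(x-\bar{x})\|_2=\theta\|Cx-d\|_2$. Since $\theta$ is determined entirely by the singular values of $C$, it depends only on $C$ and not on $x$ or $d$, exactly as the statement requires.

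The hard part will be establishing the lower bound $\|Cw\|_2\ge\theta^{-1}\|w\|_2$ on $\mathrm{range}(C^T)$; everything else is routine linear algebra. I would justify it in one of two ways. The cleanest is to use the singular value decomposition $C=U\Sigma V^T$, which diagonalizes the action of $C$ and exhibits $\theta^{-1}$ concretely as the least positive diagonal entry of $\Sigma$, with $\mathrm{range}(C^T)$ spanned by the right singular vectors corresponding to nonzero singular values. Alternatively, a compactness argument works: the continuous map $w\mapsto\|Cw\|_2$ attains its minimum over the compact set $\{w\in\mathrm{range}(C^T):\|w\|_2=1\}$, and this minimum is strictly positive because $C$ restricted to $\mathrm{range}(C^T)$ has trivial kernel. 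Either route produces a strictly positive constant depending only on $C$, which completes the proof.
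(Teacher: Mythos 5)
Your proof is correct. Note, however, that the paper does not actually prove this statement: it is recorded as a \emph{Fact} with a citation to Hoffman's 1952 paper, and is used as a black box in the proof of Proposition~\ref{prop:pre-eb}. What you have done is give a self-contained elementary proof of the special case that is actually stated (and actually used), namely a system of \emph{equalities only}. Your reduction is the right one: project $x$ orthogonally onto the affine solution set $u_0+\ker C$, observe that the residual $x-\bar{x}$ lies in $(\ker C)^\perp=\mathrm{range}(C^T)$ where $C$ is injective, and take $\theta$ to be the reciprocal of the smallest nonzero singular value of $C$; the identity $C(x-\bar x)=Cx-d$ then closes the argument, and $\theta$ visibly depends only on $C$. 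This is strictly easier than the general Hoffman bound, which also allows inequality constraints $Cu\le d$ and whose standard proofs require a polyhedral argument (e.g., ranging over the finitely many subsets of active rows, or a compactness argument over faces) to get a constant independent of $d$; for pure equalities the solution set is an affine subspace and the singular-value argument suffices. The only cosmetic caveat is the degenerate case $C=0$ (where there is no nonzero singular value), but there the feasibility hypothesis forces $d=0$ and the inequality holds trivially with $\bar x = x$, so your constant can be taken to be anything positive. In short: the argument is sound, and it buys a transparent, citation-free justification of exactly the instance of Hoffman's bound that the paper needs.
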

To prove Proposition \ref{prop:pre-eb}, consider the following linear system in $(u,v) \in \R^n \times \R^n$:
\begin{equation} \label{eq:opt-cond}
\begin{array}{rcl}
   v &=& u - E^T \nabla g(t^*), \\
   \noalign{\smallskip}
   Eu &=& t^*, \\
   \noalign{\smallskip}
   u &=& v.
\end{array}
\end{equation}
Note that $(\bar{x},\bar{x}) \in \R^n \times \R^n$ is feasible for (\ref{eq:opt-cond}) if and only if $\bar{x} \in \mathcal{X}$.  Thus, it follows from Assumption \ref{ass:non-empty} that (\ref{eq:opt-cond}) is feasible.  Now, let $z=x-\nabla f(x)=x-E^T\nabla g(Ex)$.  By Fact \ref{prop:hoff-bd}, there exist a constant $\theta>0$ and a feasible solution $(x^*,z^*)$ to (\ref{eq:opt-cond}) such that
$$ \|(x,z) - (x^*,z^*)\|_2 \le \theta\left[ \|Ex-t^*\|_2 + \|\nabla f(x)\|_2 + \left\| E^T\nabla g(Ex) - E^T\nabla g(t^*) \right\|_2 \right]. $$
Since $\| E^T\nabla g(Ex) - E^T\nabla g(t^*) \|_2 \le L \cdot \|E\| \cdot \|Ex - t^*\|_2$, the desired result follows by setting $\omega=\theta\cdot\max\{1, 1+L\|E\|\}$.

\section{Proof of Corollary \ref{cor:fn-root}} \label{app:fn-root}
Applying the inequality (\ref{ieq:decrease}) recursively yields
$$
 f(x^k)-f_{\rm min}\leq \mu^k \left( f(x^0)-f_{\rm min} \right) + \delta\sum_{j=1}^k \mu^{k-j} \|e^j\|_2^2
$$
for all $k\ge0$.  This implies that
\begin{eqnarray*}
\left| f(x^{k+1}) - f(x^k) \right| &\le& \left( f(x^k) - f_{\rm min} \right) + \left( f(x^{k+1}) - f_{\rm min} \right) \\
\noalign{\medskip}
&\le& \mu^k \left( f(x^0)-f_{\rm min} \right) + \delta \sum_{j=1}^k \mu^{k-j} \|e^j\|_2^2 \\
\noalign{\medskip}
&\quad+& \mu^{k+1} \left( f(x^0)-f_{\rm min} \right) + \delta \sum_{j=1}^{k+1} \mu^{k+1-j} \|e^j\|_2^2 \\
\noalign{\medskip}
&\le& \mu^k(1+\mu) \left( f(x^0)-f_{\rm min} \right) + \delta \left( 1+\mu^{-1} \right) \sum_{j=1}^{k+1} \mu^{k+1-j} \|e^j\|_2^2 \\
\noalign{\medskip}
&\le& \beta\sum_{j=1}^{k+1}\left( \mu^{k+1-j} \|e^j\|_2^2 + \mu^k \right),
\end{eqnarray*}
where $\beta=\max\left\{ \left( 1+\mu \right) \left( f(x^0)-f_{\rm min} \right), \delta \left( 1+\mu^{-1} \right) \right\}$.  This completes the proof.

\section{Proof of Corollary \ref{cor:iter-conv}} \label{app:iter-conv}
\begin{enumerate}
\item[\subpb] By the assumption on $\{B_k\}_{k\ge1}$, we have
\begin{equation} \label{eq:Sk}
\sum_{j=1}^k \mu^{k-j}\|e^j\|_2^2 \le \sum_{j=1}^k \mu^{k-j} O\left( \frac{1}{j^{1+\rho}} \right) 
\end{equation}
for all $k\ge1$.  To bound the quantity on the right--hand side, let us define
$$ S_k = \sum_{j=1}^k \frac{\mu^{k-j}}{j^{1+\rho}} \quad\mbox{for }k=1,2,\ldots. $$
Let $K \equiv K(\mu,\rho)>0$ be such that $\mu' = \mu(1+1/k)^{1+\rho}<1$ for all $k \ge K$, and let $C  \equiv C(\mu,\rho) \ge (1-\mu')^{-1}$ be such that $S_k \le Ck^{-(1+\rho)}$ for $k=1,2,\ldots,K$.  We now show by induction that 
\begin{equation} \label{eq:Sk-bd}
S_k \le Ck^{-(1+\rho)} \quad\mbox{for all } k\ge1. 
\end{equation}
The statement is trivially true for $k=1,2,\ldots,K$.  For $k>K$, the inductive hypothesis and our choice of $C$ imply that
$$ S_{k+1} = \mu S_k + \frac{1}{(k+1)^{1+\rho}} \le \left[ 1+C\mu\left(1+\frac{1}{k}\right)^{1+\rho} \right]\frac{1}{(k+1)^{1+\rho}} \le \frac{C}{(k+1)^{1+\rho}}. $$
This completes the inductive step.  

Now, using (\ref{eq:Sk}), (\ref{eq:Sk-bd}), Corollary \ref{cor:fn-root}, and Theorem \ref{thm:conv-rate-gen}, we have
\begin{eqnarray*}
f(x^{k+1}) - f_{\rm min} &\le& O(\mu^{k+1}) + O(S_{k+1}) \le O\left( \frac{1}{(k+1)^{1+\rho}} \right), \\
\noalign{\medskip}
\|x^k - x^{k+1}\|_2^2 &\le& O\left( S_{k+1} + \left( \frac{1+\mu}{2} \right)^k \right) = O\left( \frac{1}{(k+1)^{1+\rho}} \right), \\
\noalign{\medskip}
\mbox{dist}(x^k,\mathcal{X})^2 &\le& O\left( S_{k+1} + \left( \frac{1+\mu}{2} \right)^k \right) = O\left( \frac{1}{(k+1)^{1+\rho}} \right) 
\end{eqnarray*}
for all $k\ge0$.  This completes the proof of (a).
	
\item[\subpb] The assumption on $\{B_k\}_{k\ge1}$  implies that
$$
\sum_{j=1}^k \mu^{k-j} \|e^j\|_2^2 \le \sum_{j=1}^k \mu^{k-j} O(\rho^j) \le O(kc_1^{k}) \le O(c_2^k)
$$
for all $k\ge1$, where $c_1 = \max\{\mu,\rho\} \in (0,1)$ and $c_2 = (1+c_1)/2 \in (c_1,1)$.  Hence, by Corollary \ref{cor:fn-root} and Theorem \ref{thm:conv-rate-gen}, we have
\begin{eqnarray*}
f(x^{k+1}) - f_{\rm min} &\le& O(\mu^{k+1}) + O(c_2^{k+1}) = O(c_2^{k+1}), \\
\noalign{\medskip}
\|x^k-x^{k+1}\|_2^2 &\le& O\left( c_2^{k+1} + \left( \frac{1+\mu}{2} \right)^k \right) \le O(c_2^k), \\
\noalign{\medskip}
\mbox{dist}(x^k,\mathcal{X})^2 &\le& O\left( c_2^{k+1} + \left( \frac{1+\mu}{2} \right)^k \right) \le O(c_2^k) 
\end{eqnarray*}
for all $k\ge0$.  The desired result then follows by setting $c=\sqrt{c_2} \in (0,1)$.
\end{enumerate}
\resetspb

\section{Proof of Corollary \ref{cor:iter-conv-stoch}} \label{app:iter-conv-stoch}
By Corollary \ref{cor:fn-root} and Theorem \ref{thm:conv-rate-gen}, we have
\begin{eqnarray*}
\E{ f(x^k)-f_{\rm min} } &\le& \mu^k \left( f(x^0)-f_{\rm min} \right) + \delta\sum_{j=1}^k \mu^{k-j} \E{ \|e^j\|_2^2 }, \\
\noalign{\medskip}
\E{ \|x^k - x^{k+1}\|_2^2 } &\le& \lambda_1^2 \left[ \sum_{j=1}^{k+1} \mu^{k+1-j} \E{ \|e^j\|_2^2 } + \left( \frac{1+\mu}{2} \right)^k \right], \\
\noalign{\medskip}
\E{ \mbox{dist}(x^k,\mathcal{X})^2 } &\le& 2\kappa(1+\lambda_1^2) \left[ \sum_{j=1}^{k+1} \mu^{k+1-j} \E{ \|e^j\|_2^2 } + \left( \frac{1+\mu}{2} \right)^k \right]
\end{eqnarray*}
for all $k\ge0$.  Upon noting
$$ 
\E{ \|x^k - x^{k+1}\|_2 } \le \left( \E{ \|x^k - x^{k+1}\|_2^2 } \right)^{1/2}, \quad \E{ \mbox{dist}(x^k,\mathcal{X}) } \le \left( \E{ \mbox{dist}(x^k,\mathcal{X})^2 } \right)^{1/2}
$$
and using the assumption that $\E{\|e^k\|_2^2} \le B_k$, the rest of proof is essentially the same as that of Corollary \ref{cor:iter-conv}.

\bibliographystyle{abbrv}
\bibliography{sdpbib}

\end{document}